\numberwithin{equation}{section}
\def\RR{\mathbb R}
\def\ZZ{\mathbb Z}
\def\NN{\mathbb N}
\newcommand{\E}{\operatorname{E}} 
\newcommand{\Var}{\operatorname{Var}} 
\newcommand{\Cov}[0]{\operatorname{Cov}}
\newcommand{\vecop}{\operatorname{vec}}
\newcommand{\diag}{\operatorname{diag}}
\newcommand{\VPsi}{\operatorname{\Psi}} 
\newcommand{\Hi}{\mathbb{H}}
\newtheorem{lemma}{Lemma}[section]
\newtheorem{theorem}[lemma]{Theorem}
\newtheorem{proposition}[lemma]{Proposition}
\newtheorem{remark}[lemma]{Remark}
\xpatchcmd{\proof}{\@addpunct{.}}{\@addpunct{:}}{}{}
\DeclareFontFamily{U}{mathx}{\hyphenchar\font45}
\DeclareFontShape{U}{mathx}{m}{n}{<-> mathx10}{}
\DeclareSymbolFont{mathx}{U}{mathx}{m}{n}
\DeclareMathAccent{\widebar}{0}{mathx}{"73}
\begin{document}

\def\spacingset#1{\renewcommand{\baselinestretch}%
{#1}\small\normalsize} \spacingset{1}


%

\title{{\bf Limit theorems in the context of multivariate long-range dependence}}

\author{
Marie-Christine D\"uker\hyperlink{myth}{\parbox{3pt}{\footnotemark[1]}}}

\maketitle

\let\oldthefootnote\thefootnote
\renewcommand{\thefootnote}{\fnsymbol{footnote}}
\footnotetext[1]{\phantomsection\hypertarget{myth}{Ruhr-Universit\"at Bochum, Faculty of Mathematics, Bochum, Germany, 
\protect \url{Marie-Christine.Dueker@rub.de}}}
\let\thefootnote\oldthefootnote

%
%

\maketitle

\begin{abstract}
\noindent
This article considers multivariate linear processes whose components are either short- or long-range dependent.
The functional central limit theorems for the sample mean and the sample autocovariances for these processes are investigated, paying special attention to the mixed cases of short- and long-range dependent series. The resulting limit processes can involve multivariate Brownian motion marginals, operator fractional Brownian motions and matrix-valued versions of the so-called Rosenblatt process.

\medskip
\noindent  \textit{Keywords:} Long-range dependence; multivariate time series; linear processes; sample autocovariances; functional central limit theorem; operator self-similar processes.

\end{abstract}

%
%
%
%
%
%
%
%
%
%

\section{Introduction}
In this work, we are interested in the asymptotic behavior of the sample mean and the sample autocovariances for multivariate linear processes under several assumptions on their dependence structure, with the focus on long-range dependence. \par
For long-range dependent time series, the autocovariance decays power-like as the time lag increases.
Limit theorems for univariate long-range dependent time series were studied by a number of authors. See, for example, \cite{Dav70, HorvathKokoszka, Taqqu1975} and for an overview of long-range dependence, \cite{beran2013long, giraitis, PipirasTaqqu}.
Limit theorems for multivariate processes under long-range dependence were studied in \cite{chung_2002, Dai2013, Dai2017, Dejong2000}. Vectors of univariate long-range dependent time series were considered in \cite{BaiTaqqu, BaiTaqqu2013}. 
\par
The present work develops limit theorems for two-sided multivariate linear processes, whose components are allowed to be either short- or long-range dependent.
Under short-range dependence, the dependence parameters take values in a range different from long-range dependence, so that the autocovariances decay faster for the corresponding components. 
We investigate functional limit theorems for these processes for the sample mean, as well as for the sample autocovariances, i.e. we prove weak convergence in a multivariate product space of $D[0,1]$, the space of c\`{a}dl\`{a}g functions on $[0,1]$ equipped with the uniform metric.
Depending on the dependence structure, the limit involves Brownian motion marginals, operator fractional Brownian motions and matrix-valued versions of the so-called Rosenblatt process.
\par
Our setting is as follows. We consider an $p$-dimensional second-order stationary time series $\{X_{n}\}_{n\in \ZZ}$ with $X_{n}=(X_{1,n},\dots,X_{p,n})'$, where the prime indicates transposition.
We suppose throughout the paper that $X_{n}$ can be represented as a multivariate linear process
\begin{equation}	\label{equality_general_linear_process}
X_{n} = \sum_{j\in \ZZ} \VPsi_{j}\varepsilon_{n-j},
\end{equation}
where $\{\VPsi_{j}\}_{j \in \ZZ}$ is a sequence of matrices with $\VPsi_{j}=(\psi_{kl,j})_{k,l=1,\dots,p}\in \RR ^{p \times p}$ and 
$\{\varepsilon_{j}\}_{j\in \ZZ}$ is a sequence of mean zero independent and identically distributed (i.i.d.) random vectors with covariance matrix $\E(\varepsilon_{0}\varepsilon_{0}')=I_{p}$. 
Write the entries of the matrices $\{\VPsi_{j}\}_{j \in \ZZ}$ as
\begin{equation} \label{equality_long_range_dep_linear_process}
\psi_{kl,j}=C_{kl}(j)|j|^{d_{k}-1}, \hspace*{0.2cm} j \in \ZZ^{*},
\end{equation}
for $k,l \in \{1,\dots, p\}$, where $\ZZ^{*}=\ZZ\setminus\{0\}$. 
We further assume that $p_{1}$ components of \eqref{equality_general_linear_process} are multivariate long-range dependent and $p_{2}:=p-p_{1}$ components are multivariate short-range dependent in the following sense.
The first $p_{1}$ and the last $p_{2}$ components of the linear process \eqref{equality_general_linear_process} are called, respectively,
\begin{itemize}
\item[(L)] multivariate long-range dependent, if $d_{k} \in (0,\frac{1}{2})$ for $k \in \{1,\dots,p_{1} \}$, and $C_{kl}(j)$ in \eqref{equality_long_range_dep_linear_process} satisfies
\begin{equation*}	
C_{kl}(j) \sim \alpha^{+}_{kl} 
\hspace*{0.2cm} \text{ as } \hspace*{0.2cm} j \to \infty
\hspace*{0.2cm} \text{ and } \hspace*{0.2cm}
C_{kl}(j) \sim \alpha^{-}_{kl}
\hspace*{0.2cm} \text{ as } \hspace*{0.2cm} j \to -\infty,
\end{equation*}
where the matrices 
\begin{equation*}
A^{+}=(\alpha^{+}_{kl})_{k=1,\dots,p_{1};l=1,\dots,p},
\hspace{0.2cm}
A^{-}=(\alpha^{-}_{kl})_{k=1,\dots,p_{1};l=1,\dots,p} \in \RR^{p_{1} \times p} 
\end{equation*}
are assumed to have full rank; 
\item[(S)] multivariate short-range dependent, if $d_{k}<0$ for 
$k \in \{p_{1}+1,\dots, p\}$, and $C_{kl}(j)$ in \eqref{equality_long_range_dep_linear_process} satisfies \begin{equation*}
\sup_{ j \in \ZZ^{*} } | C_{kl}(j) | \leq \beta
\end{equation*}
for all $k \in \{p_{1}+1,\dots, p\}$, $l \in \{1,\dots,p\}$ and some constant $\beta >0$ and the matrix
\begin{equation*}
\sum_{j \in \ZZ} (\psi_{kl,j})_{k=p_{1}+1,\dots,p;l=1,\dots,p} \in \RR^{p_{2} \times p}
\end{equation*}
is assumed to have full rank. 
\end{itemize}
The definition (L) allows for quite general long-range dependent, linear time series, for example, multivariate FARIMA series; see \cite{KechagiasPipiras}.
For univariate time series, the short-range dependence definition (S) was introduced in \cite{BaiTaqqu}. It allows for exponentially decaying coefficients $\psi_{kl,j}$. The multivariate setting includes, for example, vector ARMA models.
\par
Under the introduced setting, we are interested in the asymptotic behavior of the vector-valued sample mean process
\begin{equation} \label{equality_partial_sum_process}
S_{\lfloor Nt \rfloor} =
\sum_{n=1}^{\lfloor Nt \rfloor} X_{n}, \hspace*{0.2cm} t \in [0,1],
\end{equation}
and the asymptotic behavior of the sample autocovariance process
\begin{equation}	\label{equality_autocovariance_process}
((\widehat{\Gamma}_{N,\ell}-\Gamma_{\ell})(t), \ell=0,\dots,L), \hspace*{0.2cm} t \in [0,1],
\end{equation}
with
\begin{equation} \label{equality_Gammahat}
(\widehat{\Gamma}_{N,\ell}-\Gamma_{\ell})(t)
=		
\frac{1}{N}\sum_{n=1}^{\lfloor Nt \rfloor} 
\left( X_{n} X_{n+\ell}'-\E(X_{0} X_{\ell}') \right),
\hspace*{0.2cm}
\widehat{\Gamma}_{N,\ell}(t)
=		
\frac{1}{N}\sum_{n=1}^{\lfloor Nt \rfloor} X_{n} X'_{n+\ell}.
\end{equation}
In order to prove convergence of the sample mean process under different assumptions on the dependence structure, we first consider a more general result, which is of independent interest. It states that the linear process in \eqref{equality_general_linear_process} satisfies the central limit theorem when $\sum_{j\in \ZZ} \| \VPsi_{j} \|_{F}^{2} < \infty$, where $\| \cdot \|_{F}$ denotes the Frobenius norm. 
In the univariate case, this result was proven in \citet[Theorem 18.6.5]{IbragimovLinnik}.
\par
Multivariate processes under less flexible assumptions on the dependence structure were studied by \cite{chung_2002, Dejong2000}, who considered one-sided multivariate long-range dependent linear processes and derived limit theorems for the sample mean and sample autocovariances in terms of the convergence of the finite-dimensional distributions (f.d.d.).
The works \cite{Dai2013} and \cite{Dai2017} characterized a class of processes which converge to an operator fractional Brownian motion, and the latter also considered
limit theorems for functionals of Gaussian vectors.
In \cite{BaiTaqqu2013}, limits of a vector of normalized sums of functions of long-range dependent stationary Gaussian series were studied and \cite{BaiTaqqu} investigated the limit of normalized partial sums of a vector of multilinear polynomial-form processes. 
The two latter works also allowed for \enquote{mixture} cases, where vectors of both univariate short- and long-range dependent time series were studied.
\par
The works \cite{BaiTaqqu, BaiTaqqu2013} are perhaps the closest to this study. 
However, our work is based on a linear process generated by a multivariate i.i.d.\ sequence and allows for multivariate short- and long-range dependence. Considering the sample autocovariance matrix of such a process leads to a matrix-valued process, whose entries depend on different combinations of short- and long-range dependent parameters.
In contrast, \cite{BaiTaqqu,BaiTaqqu2013} considered vectors of univariate processes. 
The dependence structure in \cite{BaiTaqqu2013} is determined by the Hermite rank of the respective function applied to a univariate long-range dependent process. 
The work \cite{BaiTaqqu} supposed that each component can be represented as a univariate multilinear polynomial form process obtained by applying an off-diagonal multi-linear polynomial-form filter to an i.i.d.\ sequence. They allowed the components to be either short- or long-range dependent. 
See Remarks \ref{remark_comp_smp} and \ref{remark_comp2} for more details. 
\par
The rest of the paper is organized as follows. In Section \ref{section_2}, some properties of multivariate short- and long-range dependent time series are reviewed and the processes resulting as limits of \eqref{equality_partial_sum_process} and \eqref{equality_autocovariance_process} are given.
In Section \ref{section_3}, we present the limit theorems concerning the sample mean process \eqref{equality_partial_sum_process}.
In Section \ref{section_4}, the functional limit theorems for the sample autocovariances \eqref{equality_autocovariance_process} are presented.
In the last section, we provide the detailed proofs.

\section{Preliminaries} \label{section_2}
In this section, we introduce some further notation, give more details about short- and long-range dependence and define the resulting limit processes.
\par
The autocovariances of a second-order stationary zero mean time series $\{X_{n}\}_{n \in \ZZ}$ at lag $\ell$ are denoted and defined as 
\begin{equation*}
			\Gamma_{\ell}
	=		(\gamma_{kl}(\ell))_{k,l=1,\dots,p}
	=		\E(X_{0} X_{\ell}')
	=		\sum_{j\in \ZZ} \VPsi_{j} \VPsi_{j+\ell}'.
\end{equation*}
The autocovariance $\Gamma_{\ell}$ takes values in the space of real-valued matrices $\RR^{p \times p}$ equipped with the Frobenius inner product 
$\langle A, B \rangle=\sum_{k,l=1}^{p} a_{kl}b_{kl}$ for 
$A=(a_{kl})_{k,l=1,\dots,p}$, $B= (b_{kl})_{k,l=1,\dots,p}$, which induces the Frobenius norm $ \| A \|_{F}^{2}=\langle A, A \rangle=\sum_{k,l=1}^{p} |a_{kl}|^2$.
The convergence of the finite-dimensional distributions and that in law are denoted by $\overset{f.d.d.}{\longrightarrow}$ and $\overset{\mathcal{L}}{\longrightarrow}$, respectively. We use the notation $D[0,1]^{p}$ for the $p$-dimensional product space of $D[0,1]$.
Furthermore, we write $a^{G}=\operatorname{diag}(a^{g_{1}},\dots,a^{g_{p}})$ for $a > 0$ and a diagonal matrix $G=\operatorname{diag}(g_{1},\dots,g_{p})$. Also, we set 
$D=\diag(d_{1},\dots,d_{p})$,
$D_{p_{1}}=\diag(d_{1},\dots,d_{p_{1}})$ and 
$D^{c}_{p_{2}}=\diag(d_{p_{1}+1},\dots,d_{p})$ with $0 \leq p_{1} \leq p$, $p_{2} = p-p_{1}$ and $D_{0}=D^{c}_{p+1}=0$
for the dependence parameters introduced in \eqref{equality_long_range_dep_linear_process}. 
\par
In proving our convergence results, we will use the following two propositions on autocovariances of linear processes satisfying the long- or short-range dependence definition (L) and (S) in Section 1. For this purpose, let $\{X^{\operatorname{L}}_{n}\}_{n \in \ZZ}$ denote the $p_{1}$-dimensional process satisfying (L) and $\{X^{\operatorname{S}}_{n}\}_{n \in \ZZ}$ the $p_{2}$-dimensional process satisfying (S), which combined together make the process $\{X_{n}\}_{n \in \ZZ}$. The respective autocovariances are denoted by
\begin{equation*}
\Gamma_{p_{1},\ell}=\E(X^{\operatorname{L}}_{0}X^{\operatorname{L}'}_{\ell})
\hspace*{0.7cm}
\text{ and }
\hspace*{0.7cm}
\Gamma_{p_{2},\ell}=\E(X^{\operatorname{S}}_{0}X^{\operatorname{S}'}_{\ell}).
\end{equation*}

\begin{proposition} \label{Proposition_mLRD}
The autocovariances of the process $\{X^{\operatorname{L}}_{n}\}_{n \in \ZZ}$ satisfy
\begin{equation}	\label{equality_Definition_KP_multivariate_stationary_lrd}
\Gamma_{p_{1},\ell}=		
\ell^{D_{p_{1}}-\frac{1}{2}I_{p_{1}}}R(\ell)\ell^{D_{p_{1}}-\frac{1}{2}I_{p_{1}}}=	
(R_{kl}(\ell)\ell^{d_{k}+d_{l}-1})_{k,l=1,\dots, p_{1}},	
\end{equation}
where $R(\ell)= (R_{kl}(\ell))_{k,l=1,\dots,p_{1}}$ is a function satisfying
\begin{equation*}
R(\ell) \sim R=(R_{kl})_{k,l=1,\dots, p_{1}} \text{ as } \ell \to \infty
\end{equation*}
with
\begin{equation}	\label{equation_Rij}
R_{kl}=		
\operatorname{B}(d_{k},d_{l})
\left( c_{1,kl} \frac{\sin(\pi d_{k})}{\sin(\pi (d_{k}+d_{l}))}+
c_{2,kl}+c_{3,kl} \frac{\sin(\pi d_{l})}{\sin(\pi (d_{k}+d_{l}))} \right) ,
\end{equation}
where $\operatorname{B}$ denotes the beta function and
\begin{equation*} 
c_{1,kl}	=(A^{-}(A^{-})')_{kl},
\hspace*{0.2cm}
c_{2,kl}	=(A^{-}(A^{+})')_{kl},
\hspace*{0.2cm}
c_{3,kl}	=(A^{+}(A^{+})')_{kl}.
\end{equation*}
\end{proposition}
\begin{proof}
The proof is given in \citet[Proposition 3.1]{KechagiasPipiras}.
\end{proof}

\begin{proposition}	\label{Proposition_mSRD}
The autocovariances of the process $\{X^{\operatorname{S}}_{n}\}_{n \in \ZZ}$ are absolutely 
summable in the sense that 
\begin{equation*}	
\sum_{\ell \in \ZZ} \| \Gamma_{p_{2},\ell} \|_{F} < \infty.
\end{equation*}
\end{proposition}

\begin{proof}
Note that $\Gamma_{p_{2},-\ell}=\Gamma_{p_{2},\ell}'$.
As in the proof of \citet[Proposition 3.1]{KechagiasPipiras}, one has
\begin{equation*}
|\Gamma_{p_{2},\ell}| \leq \ell^{D^{c}_{p_{2}}-\frac{1}{2}I_{p_{2}}} T(\ell) \ell^{D^{c}_{p_{2}}-\frac{1}{2}I_{p_{2}}}, 
\hspace{0.2cm}
\ell>0,
\end{equation*}
for some $\RR^{p_{2} \times p_{2}}$-valued function $T(\ell)$, whose components are slowly varying functions. Indeed, note that
\begin{equation*}
|\gamma_{kl}(\ell)| \leq \ell^{d_{k}+d_{l}-1} 
\sum_{i=1}^{3} T_{i,kl}(\ell),
\hspace{0.2cm}
\ell>0,
\end{equation*}
where for example,
\begin{equation*}
T_{1,kl}(\ell)=
p \beta^2
\sum_{j=\ell+1}^{\infty} \Big(\frac{j}{\ell} \Big)^{d_{k}-1} 
\Big(\frac{j}{\ell}-1\Big)^{d_{l}-1}\frac{1}{\ell}.		
\end{equation*}
Then, there is a constant $C$ such that
\begin{equation*}
\sum_{\ell \in \ZZ} \|\Gamma_{p_{2},\ell}\|_{F} \leq
2C \sum_{\ell=0}^{\infty}  
\left( \sum_{k,l=p_{1}+1}^{p} \left| 
\ell^{d_{k}+d_{l}-1}\right| ^2\right) ^{\frac{1}{2}}.
\end{equation*}
The absolute summability follows, since $1-d_{k}-d_{l}>1$.
\end{proof}
\par
We now turn to the processes resulting as limits of the sample mean and autocovariance processes. 
In connection to the sample mean process, we follow \cite{DidierPipiras2011} to introduce operator fractional Brownian motions (OFBMs). OFBMs are multivariate extensions of the univariate fractional Brownian motion and denoted here as
$\mathcal{B}^{(p)}_{H}(t)=(\mathcal{B}_{1,H}(t),\dots,\mathcal{B}_{p,H}(t))' \in \RR^{p}$ with $ t \in \RR$ and some symmetric matrix $H \in \RR^{p \times p}$. They are Gaussian, operator self-similar with exponent $H$ and have stationary increments.
Additionally, it shall be assumed that they are proper, that is,
for each $t \in \RR$, the distribution of $\mathcal{B}^{(p)}_{H}(t)$ is not contained in a proper subspace of $\RR^{p}$.
The process $\{\mathcal{B}^{(p)}_{H}(t)\}_{t \in \RR}$ is operator self-similar 
(\cite{HudsonMason1982,Laha1981}) if $\{\mathcal{B}^{(p)}_{H}(ct)\}_{t \in \RR} \overset{f.d.d.}{=} \{c^{H}\mathcal{B}^{(p)}_{H}(t)\}_{t \in \RR}$ for every $c>0$.
To introduce an integral representation for OFBMs, let $W(dx)=(W_{1}(dx),\dots,W_{p}(dx))$ be a multivariate, real-valued Gaussian random measure satisfying
\begin{equation*}
\E W(dx)=0,
\hspace*{0.7cm}
\E W(dx)W'(dx)=I_{p} dx,	
\hspace*{0.7cm}
\E W_{k}(dx)W_{l}(dy)=0, \hspace*{0.2cm} x \neq y.
\end{equation*}
Then, if the eigenvalues of the symmetric matrix $H$ denoted by $h_{k}$ satisfy $0<h_{k}<1$ and $h_{k} \neq \frac{1}{2}$ for $k=1,\dots, p$, in the time domain, the OFBM $\mathcal{B}^{(p)}_{H}(t)$ admits the representation 
$\mathcal{B}^{(p)}_{H}(t)\overset{f.d.d.}{=} \mathcal{B}^{(p)}_{H,M^{+},M^{-}}(t)$ with
\begin{equation*}
\mathcal{B}^{(p)}_{H,M^{+},M^{-}}(t)
=
\int_{\RR} 
\Big(((t-x)^{H-\frac{1}{2}I}_{+}-(-x)_{+}^{H-\frac{1}{2}I}) M^{+} + 
((t-x)^{H-\frac{1}{2}I}_{-}-(-x)_{-}^{H-\frac{1}{2}I}) M^{-} \Big) W(dx),
\end{equation*}
where $x_{+}=\max(0,x)$, $x_{-}=\max(-x,0)$, $ M^{+},  M^{-} \in \RR^{p \times p}$; see \cite{DidierPipiras2011}.
According to \cite{LAVANCIER}, the corresponding cross-covariance function is given by
\begin{equation}  \label{equality_cov_OFBMS}
\E \mathcal{B}^{(p)}_{H}(t) \mathcal{B}^{(p)'}_{H}(u) =	
|t|^{H}\widetilde{R}|t|^{H}+|u|^{H}\widetilde{R}'|u|^{H}-|t-u|^{H}\widetilde{R}(t-u)|t-u|^{H},
\end{equation}
if $0<h_{k}<1$ and $h_{k} + h_{l} \neq 1 $ for $k,l \in \{1,\dots,p\}$, where 
\begin{equation*}
\widetilde{R}(t)=
\begin{cases}
\widetilde{R},		&\hspace*{0.2cm} \text{if }	t>0,\\
\widetilde{R}',		&\hspace*{0.2cm} \text{if }	t<0
\end{cases}
\end{equation*}
and $\widetilde{R}=(\widetilde{R}_{kl})_{k,l=1,\dots, p}$ is defined as
\begin{equation} \label{equality_tildeRij}
\widetilde{R}_{kl}=		
\operatorname{B}(h_{k}+\frac{1}{2}, h_{l}+\frac{1}{2})
\left( \widetilde{c}_{1,kl} 
\frac{\cos(\pi h_{k} )}{\sin(\pi (h_{k}+h_{l}))}+
\widetilde{c}_{2,kl}+\widetilde{c}_{3,kl} \frac{\cos(\pi h_{l})}{\sin(\pi (h_{k}+h_{l}))}		 \right)
\end{equation}
with
\begin{equation*}
\widetilde{c}_{1,kl}=(M^{-}(M^{-})')_{kl},
\hspace*{0.2cm}
\widetilde{c}_{2,kl}=(M^{-}(M^{+})')_{kl},
\hspace*{0.2cm}
\widetilde{c}_{3,kl}=(M^{+}(M^{+})')_{kl}.
\end{equation*}
\par
In connection to the sample autocovariance process, the limit process will possibly be non-Gaussian. We will represent it by means of double Wiener-It\^{o} integrals, as a matrix-valued generalization of the univariate Rosenblatt process.
For the sake of simplicity, we define it in a vectorized form using the $\vecop$ operator. The $\vecop$ operator transforms a matrix into a vector by stacking the columns of the matrix one underneath the other.
Let $L^{2}(\RR^2, \RR^{p^2 \times p^2})$ denote the space of all functions $f:\RR^2 \to \RR^{p^2 \times p^2}$ equipped with the norm $\Vert f \Vert^2= \int_{\RR^{2}} \| f(x_{1},x_{2}) \|^2_{F} dx_{1}dx_{2}<\infty$. Then, for $f \in L^{2}(\RR^2, \RR^{p^2 \times p^2})$, we define a double Wiener-It\^{o} integral with respect to a multivariate real-valued Gaussian random measure $W(dx)$ as
\begin{equation} \label{eq:doublestochasticintegral}
I_{2}(f)=
\int_{\RR^2}' f(x_{1},x_{2}) \vecop\Big(W(dx_{1})W'(dx_{2}) \Big),
\end{equation}
where $\int_{\RR^2}'$ means that integration excludes the diagonals. See, for example \cite{Major2014}, for more information about multiple Wiener-It\^{o} integrals. 
Then, the $\RR^{p^2}$-valued process $\{Z(t)\}_{t \in \RR}$ is defined as
\begin{equation} \label{equality_limit_process_non_Gaussian}
Z(t)=I_{2}(f_{H,t}),
\end{equation}
with $f_{H,t}: \RR^2 \to \RR^{p^2 \times p^2}$ given by $f_{H,t}(x_{1},x_{2}):=f_{H,t,M^{+},M^{-}}(x_{1},x_{2})$ with
\begin{equation}  \label{equation_nu_Rosenblatt}
f_{H,t,M^{+},M^{-}}(x_{1},x_{2}) = \sum_{s_{1},s_{2} \in \{+,-\}} \int_{0}^{t} ((v-x_{2})^{H-I_{p}}_{s_{2}} \otimes (v-x_{1})^{H-I_{p}}_{s_{1}}) (M^{s_{2}} \otimes  M^{s_{1}})  dv,
\end{equation}
where $\otimes$ denotes the Kronecker product and $M^{+}, M^{-} \in \RR^{p \times p}$. The eigenvalues of the symmetric matrix $H \in \RR^{p \times p}$ are assumed to satisfy $h_{k} + h_{l} \in (0,1/2)$ for $k,l=1,\dots, p$.
Like the OFBM, the process $\{Z(t)\}_{t \in \RR}$ is supposed to be proper.
It is also operator self-similar and has stationary increments; see Lemma \ref{Lemma_properties_Rosen}. More precisely, the process $\{Z(t)\}_{t \in \RR}$ is operator self-similar with
scaling family $\{\Delta_{c}:\RR^{p^2} \to \RR^{p^2} ~|~ c>0\}$, where
$\Delta_{c} = c^{H} \otimes c^{H}$ that means $Z(ct) \overset{f.d.d.}{=} \Delta_{c} Z(t)$.

\section{Convergence of the sample mean process} \label{section_3}
In this section, we state the convergence results for the vector-valued sample mean process.
The following theorem gives the asymptotic normality for a large class of multivariate linear processes and will serve as a helpful tool to investigate the functional limit theorems under different assumptions on the dependence structure.

\begin{theorem}	\label{Theorem_general_multivariate_linear_process}
Let $\{X_{n}\}_{n \in \ZZ}$ be a stationary linear process \eqref{equality_general_linear_process}
with $\sum_{j\in \ZZ} \| \VPsi_{j} \|_{F}^{2} < \infty$ and set $\widetilde{\Sigma}_{N}^2:=\E(S_{N}S_{N}')$.
Suppose there is a nonsingular matrix $\Sigma_{N}^2$ such that $\widetilde{\Sigma}_{N}^2\sim\Sigma_{N}^2$ componentwise, as $N \to \infty$, and
\begin{equation*}
\lim_{N\to \infty} \Var(\Sigma_{N}^{-1} S_{N}) =I_{p}.
\end{equation*}
If each diagonal entry of the matrix
$\Sigma_{N}^2=(\sigma^{2}_{kl}(N))_{k,l=1,\dots, p}$
goes to infinity as $N \to \infty$, then
\begin{equation*}
\Sigma_{N}^{-1} \sum_{n=1}^{N} X_{n} \overset{\mathcal{L}}{\longrightarrow} \mathcal{N}(0,I_{p}).
\end{equation*}
\end{theorem}

\begin{remark}
The conditions in Theorem \ref{Theorem_general_multivariate_linear_process} are satisfied under multivariate long- as well as under multivariate short-range dependence. The assumptions on the matrices $A^{+}$, $A^{-}$ in (L) and $\sum_{j \in \ZZ} (\psi_{kl,j})_{k=p_{1}+1,\dots,p;l=1,\dots,p}$ in (S) to have full rank ensure that there is a nonsingular matrix $\Sigma_{N}^2$ with $\widetilde{\Sigma}_{N}^2\sim\Sigma_{N}^2$ and $\lim_{N\to \infty} \Var(\Sigma_{N}^{-1} S_{N}) =I_{p}$. The matrix $\Sigma_{N}^2$ satisfies $\sigma^{2}_{kk}(N) \to \infty$ as $N \to \infty$.
See Lemma \ref{Lemma_cross_cov_mixture_sample_mean} for $t=u=1$.
\end{remark}

The following result is the functional central limit theorem for the sample mean process, allowing the multivariate linear process to admit either short- or long-range dependence.
The limit process in the result is Gaussian and given by
\begin{equation}	\label{equation_limit_process_G_sample_mean_process}
\mathcal{G}(t)=
\begin{pmatrix}
C_{H}^{-1} \mathcal{B}^{(p_{1})}_{H}(t) \\
\mathcal{W}^{(p_{2})}(t)
\end{pmatrix}
\end{equation}
with $C_{H}=H-\frac{1}{2}I_{p_{1}}$ and $H=D_{p_{1}}+\frac{1}{2}I_{p_{1}}$, where 
$\{\mathcal{B}^{(p_{1})}_{H}(t)\}_{ t \in [0,1] }$ is an $\RR^{p_{1}}$-valued OFBM restricted to the unit interval and $\{\mathcal{W}^{(p_{2})}(t)\}_{t \in [0,1]}$ is an 
$\RR^{p_{2}}$-valued multivariate Brownian motion 
with $\mathcal{W}^{(p_{2})}(t)=(\mathcal{W}_{p_{1}+1}(t),\dots, \mathcal{W}_{p}(t))'$. 
The cross-covariances of $\mathcal{B}^{(p_{1})}_{H}(t)$ are given in \eqref{equality_cov_OFBMS}. The corresponding matrix $\widetilde{R}$ is defined in \eqref{equality_tildeRij} and depends on the parameters $\widetilde{c}_{i,kl}$, $i=1,2,3$, which are defined in terms of the matrices $A^{+},A^{-}$ arising in (L).
The cross-covariances of $\{\mathcal{W}^{(p_{2})}(t)\}_{t \in [0,1]}$ can be written as
\begin{equation*}
\E \mathcal{W}^{(p_{2})}(t)\mathcal{W}^{(p_{2})'}(u) =
\min(t,u) \sum_{\ell \in \ZZ} \Gamma_{p_{2},\ell},
\hspace*{0.2cm} t,u \in [0,1].
\end{equation*}
The cross-covariance structure between $\mathcal{B}^{(p_{1})}_{k,H}(t)$ and $\mathcal{W}^{(p_{2})}_{l}(t)$ for $k = 1,\dots,p_{1}$ and $l = p_{1}+1,\dots,p $ is given by
\begin{equation*}
\E \mathcal{B}^{(p_{1})}_{k,H}(t)\mathcal{W}^{(p_{2})}_{l}(u) =		
\sum_{l=1}^{p} \widetilde{h}^{-1}_{k}
\Big((
 t^{\widetilde{h}_{k}} \alpha^{-}_{kl} +
u^{\widetilde{h}_{k}} \alpha^{+}_{kl}-
|t-u|^{\widetilde{h}_{k}} \alpha_{kl}(t-u))
\sum_{j \in \ZZ} \psi_{kl,j} \Big),
\end{equation*}
for $d_{k}+d_{l} \geq 0$, where $\alpha_{kl}(t)=\alpha_{kl}^{+} \mathds{1}_{\{t>0\}}+ \alpha_{kl}^{-}\mathds{1}_{\{t<0\}}$ and 
$\widetilde{h}_{k}=d_{k}+1$. Otherwise, when $d_{k}+d_{l} < 0$, the components are uncorrelated. Whenever we refer to the process $\{\mathcal{G}(t)\}_{t \in [0,1]}$ in
\eqref{equation_limit_process_G_sample_mean_process}, we mean the process with the previously described cross-covariance structure.
\begin{theorem}	\label{Theorem_invarianceprinciple}
Let $\{X_{n}\}_{n \in \ZZ}$ be a stationary linear process 
\eqref{equality_general_linear_process} whose components satisfy (L) and (S), with
$\E\| \varepsilon_{0} \|^{2+\delta}<\infty$ for some $\delta>0$.
Then,
\begin{equation*}
A_{N}^{-1}(H)S_{\lfloor Nt \rfloor} 
\overset{\mathcal{L}}{\longrightarrow} 
\mathcal{G}(t),
\hspace*{0.2cm} t \in [0,1],
\end{equation*}
in $D[0,1]^{p}$, where $\{\mathcal{G}(t)\}_{t \in [0,1]}$ is a Gaussian process given in
\eqref{equation_limit_process_G_sample_mean_process}.
The normalization 
$A_{N}(H)=\operatorname{diag}(N^{H},N^{\frac{1}{2}}I_{p_{2}})$
is such that there is a non-singular matrix $C(H) \in \RR^{p \times p}$ with
\begin{equation*}
\lim_{N \to \infty} \Var(A_{N}^{-1}(H) \sum_{n=1}^{N} X_{n} )=C(H).
\end{equation*}
\end{theorem}

\begin{remark} \label{re:dep_parameters}
The dependence parameters $d_{l}$ for $l \in \{p_{1}+1,\dots, p\}$ determine the short-range dependent components, 
while the dependence parameters $d_{k}$ with $k \in \{1,\dots, p_{1}\}$ determine the long-range dependent components.
Choosing $d_{l}$ with $l \in \{p_{1}+1,\dots, p\}$ small enough to get $d_{k}+d_{l}<0$ for $d_{k}$ with $k \in \{1,\dots, p_{1}\}$,
yields an asymptotic independence between the short- and long-range dependent components.
\end{remark}

\begin{remark} \label{remark_comp_smp}
As noted in the previous remark, the asymptotic independence in Theorem \ref{Theorem_invarianceprinciple} depends on the interplay between the dependence parameters of the long- and short-range dependent components.
In contrast, \cite{BaiTaqqu2013} proved that the short- and long-range dependent components of a vector of functions of a univariate long-range dependent process are always asymptotically independent; see Theorem 5 in \cite{BaiTaqqu2013}. 
By expressing the sample mean process in \eqref{equality_partial_sum_process} componentwise, each component can be viewed as the sample mean process of the sum of different linear processes.
An implication of Theorem 3.5 in \cite{BaiTaqqu} is that a vector of univariate linear processes which are either short- and long-range dependent converges to a vector whose components are either a univariate Brownian motion or fractional Brownian motion, which leads to non-proper limiting process.
In contrast, our assumptions (L) and (S) on the linear process \eqref{equality_general_linear_process} ensure a proper limiting process.
\end{remark}

\section{Convergence of the sample autocovariance process} \label{section_4}
We first introduce some notation. For simplicity, we write
$Y_{N,\ell}(t)=(\widehat{\Gamma}_{N,\ell}-\Gamma_{\ell})(t)$ and
denote the $kl$-th component of $Y_{N,\ell}(t)$ by
\begin{equation*}
Y_{kl,N,\ell}(t)=	
\frac{1}{N}\sum_{n=1}^{\lfloor Nt \rfloor} 
\left( X_{k,n} X_{l,n+\ell}-\E(X_{k,0} X_{l,\ell}) \right).
\end{equation*}
When $k=l$, it is well-known (see \citet[Theorem 3.3]{HorvathKokoszka}) that the normalized limit of 
$Y_{kk,N,\ell}(t)$ is the Rosenblatt process when $2d_{k} \in (\frac{1}{2},1)$ and the usual Brownian motion when $2d_{k} \in (0,\frac{1}{2})$. This suggests to consider the index sets
\begin{equation*}
\begin{aligned}
I_{\operatorname{L}}&=\{(k,l) \in \{1,\dots,p_{1}\}^2 | d_{k}+d_{l} \in (\frac{1}{2},1) \},\\
\hspace*{0.2cm}
I_{\operatorname{S}}&=\{(k,l) \in \{1,\dots,p\}^2 | d_{k}+d_{l} \in (-\infty,\frac{1}{2}) \},
\end{aligned}
\end{equation*}
where the subscripts L and S refer to the long- and short-range dependence of the expected limits, that is the Rosenblatt process and Brownian motion, respectively.
\par
For a matrix $M=(M_{kl})_{k,l=1,\dots, p}$ and an index set $I\subset \{1,\dots, p\}^2$, we set
\begin{equation*}
\vecop_{I}(M)=E_{I}\vecop(M),
\end{equation*}
where $E_{I} \in \{0,1\}^{|I| \times p^2}$ denotes an elimination matrix, which transforms $\vecop(M)$ into a vector including only the matrix elements with indices in $I$.
Then, $Y_{N,\ell}(t)$ is partitioned into
\begin{equation*}
Y_{N,\ell}^{\operatorname{L}}(t)=\vecop_{I_{\operatorname{L}}}(Y_{N,\ell}(t)),
\hspace{0.2cm}
Y_{N,\ell}^{\operatorname{S}}(t)=\vecop_{I_{\operatorname{S}}}(Y_{N,\ell}(t))
\end{equation*}	
and $\widehat{\Gamma}_{N,\ell}(t)$ given in \eqref{equality_Gammahat} into
\begin{equation} \label{eq:sampleSL}
\widehat{\Gamma}^{\operatorname{L}}_{N,\ell}(t)=		
\vecop_{I_{\operatorname{L}}}(\widehat{\Gamma}_{N,\ell}(t)),
\hspace{0.2cm}
\widehat{\Gamma}^{\operatorname{S}}_{N,\ell}(t)=	
\vecop_{I_{\operatorname{S}}}(\widehat{\Gamma}_{N,\ell}(t)).
\end{equation}			
The limit process $\{Z^{\operatorname{L}}(t)\}_{t \in [0,1]}$ of $\widehat{\Gamma}^{\operatorname{L}}_{N,\ell}(t)$ in the result given below is defined as
\begin{equation}	\label{equation_ZL_Z_reduced_to_L}
Z^{\operatorname{L}}(t)=E_{I_{\operatorname{L}}} Z(t),
\end{equation}
where $Z(t)$ is given in \eqref{equality_limit_process_non_Gaussian}. The corresponding function is defined in \eqref{equation_nu_Rosenblatt} with $H=D$, $M^{+}=((A^{+})' ~~ 0_{p \times p_{2}})'$ and $M^{-}=((A^{-})' ~~ 0_{p \times p_{2}})'$, where $A^{+},A^{-}$ are given in (L)
and $0_{p \times p_{2}}$ denotes an $p \times p_{2}$ matrix with all entries equal to zero.
The limit process $\{G^{\operatorname{S}}_{\ell}(t)\}_{t \in [0,1]}$ of $\widehat{\Gamma}^{\operatorname{S}}_{N,\ell}(t)$ is a multivariate Brownian motion with cross-covariances
\begin{equation} \label{equality_covariance_Gaussian_Ishort}
\begin{aligned}
\Cov(G^{\operatorname{S}}_{\ell_{1}}(t),G^{\operatorname{S}}_{\ell_{2}}(u))=	&	
\min(t,u)E_{I_{\operatorname{S}}} \Bigg(  \sum_{r \in \ZZ} \Big( 
\Gamma_{r+\ell_{2}} \otimes \Gamma_{r-\ell_{1}}+
K_{p} (	\Gamma_{r} \otimes \Gamma_{r+\ell_{2}-\ell_{1}}	) 
\Big)  \\
& \hspace{1cm} 
+  \sum_{r\in \ZZ} \sum_{i\in \ZZ} 
(\VPsi_{i+\ell_{1}} \otimes \VPsi_{i}) \Sigma
(\VPsi_{i+r} \otimes \VPsi_{i+r+\ell_{2}} )'	 \Bigg) E_{I_{S}}',
\end{aligned}	 
\end{equation}
where 
\begin{equation}	\label{equation_big_Sigma}
\Sigma:=\sigma^{*} -\vecop(I_{p})	(\vecop(I_{p})	)'	-I_{p^2}-K_{p}
\end{equation}
with $ \sigma^{*} := \E(\vecop(\varepsilon_{0}\varepsilon_{0}')
(\vecop(\varepsilon_{0}\varepsilon_{0}'))')$. 
Furthermore, $K_{p}$ denotes the commutation matrix, which transforms $\vecop(M)$ into $\vecop(M')$ for a matrix $M=(M_{kl})_{k,l=1,\dots,p}$; see \cite{MagnusNeudecker} for more details on these kind of operations.
\par
In order to characterize the joint distribution of the processes $\{Z^{\operatorname{L}}(t)\}_{t \in [0,1]}$ in \eqref{equation_ZL_Z_reduced_to_L} and $\{G^{\operatorname{S}}_{\ell}(t)\}_{t \in [0,1]}$ in \eqref{equality_covariance_Gaussian_Ishort} we give the cross-covariance structure between the two Gaussian processes $W(t)$ and $G^{\operatorname{S}}_{\ell}(t)$, where $W(t)$ induces the random measure in the integral representation \eqref{equation_ZL_Z_reduced_to_L}.
The cross-covariance structure is given by
\begin{equation*}
\begin{aligned}
\E(G^{\operatorname{S}}_{\ell}(t)W'(u))
=	
\min\{t,u\} \sum_{ r \in \ZZ} \sum_{i \in \ZZ} E_{I_{\operatorname{S}}} 
(\VPsi_{r+\ell-i} \otimes \VPsi_{r-i})\widetilde{\Sigma},
\end{aligned}
\end{equation*}
where $\widetilde{\Sigma}=\E(\vecop(\varepsilon_{0}\varepsilon_{0} ')\varepsilon_{0}' )$. See also Remark 4.3. for more information about the dependence structure.
The following theorem gives the joint convergence of $\widehat{\Gamma}^{\operatorname{L}}_{N,\ell}(t)$ and $\widehat{\Gamma}^{\operatorname{S}}_{N,\ell}(t)$.

\begin{theorem}	\label{Theorem_Mixture_SRD_LRD}
Let $\{X_{n}\}_{n \in \ZZ}$ be a stationary linear process 
\eqref{equality_general_linear_process} whose components satisfy (L) and (S), with $E\| \varepsilon_{0} \|^{5} < \infty$.Then,
\begin{equation*}
\Bigg(
B_{N}^{-1}(D)
\begin{pmatrix}
Y_{N,\ell}^{\operatorname{L}}(t) \\
Y_{N,\ell}^{\operatorname{S}}(t) 
\end{pmatrix}
, \ell=0,\dots,L \Bigg)
\overset{\mathcal{L}}{\longrightarrow}
\Bigg(
\begin{pmatrix}
Z^{\operatorname{L}}(t) \\
G^{S}_{\ell}(t)
\end{pmatrix}
,\ell=0,\dots,L\Bigg),
\hspace*{0.2cm}
t \in [0,1],
\end{equation*}
in $D[0,1]^{p^2}$, where $\{Z^{\operatorname{L}}(t)\}_{t \in [0,1]}$ is defined in \eqref{equation_ZL_Z_reduced_to_L}, 
$\{G^{\operatorname{S}}_{\ell}(t)\}_{t \in [0,1]}$ in \eqref{equality_covariance_Gaussian_Ishort}. Furthermore, $Z^{\operatorname{L}}(t)$ and $G^{\operatorname{S}}_{\ell}(t)$ are uncorrelated but not independent.
The normalization $B_{N}(D)=\diag(\Delta_{N},N^{\frac{1}{2}}I_{|I_{\operatorname{S}}|})$
with $\Delta_{N} = E_{I_{\operatorname{L}}}( N^{D-\frac{1}{2}I_{p}} \otimes N^{D-\frac{1}{2}I_{p}} )E_{I_{\operatorname{L}}}'$ is such that there are non-singular matrices 
$C_{1}(D_{p}), C_{2}$ with
\begin{equation*}
\lim_{N \to \infty} \Cov(\Delta_{N}^{-1}\widehat{\Gamma}^{\operatorname{L}}_{N,\ell}(1),
\Delta_{N}^{-1}\widehat{\Gamma}^{\operatorname{L}}_{N,\ell}(1))=C_{1}(D_{p})
\hspace{0.2cm}\text{ and }\hspace{0.2cm}
\lim_{N \to \infty} N\Cov(
\widehat{\Gamma}^{\operatorname{S}}_{N,\ell}(1),
\widehat{\Gamma}^{\operatorname{S}}_{N,\ell}(1))
=	C_{2}. 
\end{equation*} 
\end{theorem}

\begin{remark} \label{Remark3}
The sum of two dependence parameters $d_{k}+d_{l}$ determines if the corresponding component of the sample autcovariance process is long- or short-range dependent. The case when a component behaves long-range dependent is characterized by $d_{k}+d_{l}\in (\frac{1}{2},1)$ and can only occur when the sample autocovariances between two long-range dependent components are considered. 
\end{remark}

\begin{remark} \label{Remark4}
As stated in Theorem \ref{Theorem_Mixture_SRD_LRD}, the processes $\{Z^{\operatorname{L}}(t)\}_{t \in [0,1]}$ and $\{G^{\operatorname{S}}_{\ell}(t)\}_{t \in [0,1]}$ are uncorrelated but not independent. 
To understand why these processes are not independent, note that the sample autocovariances in \eqref{equality_autocovariance_process} can be separated into diagonal and off-diagonal parts; see \eqref{eq:decomp_off_diagonal}.
While the diagonal terms are asymptotically negligible for the the long-range dependent components (see Lemma \ref{Lemma_convergence_diagonal}), the diagonals are crucial for the asymptotic behavior of the short-range dependent components. According to \eqref{eq:correlated}, the diagonals in the short-range dependent components influence the resulting dependence structure and lead to dependent limiting processes $\{Z^{\operatorname{L}}(t)\}_{t \in [0,1]}$ and $\{G^{\operatorname{S}}_{\ell}(t)\}_{t \in [0,1]}$.
\end{remark}

\begin{remark} \label{remark_comp2}
In contrast to Remark \ref{Remark4}, \cite{BaiTaqqu} studied vectors of univariate multilinear polynomial form processes whose filters depend on either short- or long-range dependent components and exclude the diagonals. The resulting limit theorem gives asymptotic independence between the short- and long-range dependent components when the linear forms are at least of order two; see Theorem 3.5 in \cite{BaiTaqqu}. 
Our setting can also be compared to \cite{BaiTaqqu2013} when the Hermite rank in each component is supposed to be two, which is the same as considering a vector of univariate sample autocovariances. In this case, the short- and long-range dependent components are asymptotically independent; see Theorem 5 in \cite{BaiTaqqu2013}. 
Furthermore, our setting ensures a proper limiting process, while the limits in \cite{BaiTaqqu, BaiTaqqu2013} are not necessarily proper.
\end{remark}

\section{Proofs}

\subsection{Proof of Theorem \ref{Theorem_general_multivariate_linear_process}}
We first state a lemma from \cite{Rack_Suquet_Op_2011}, which gives sufficient conditions for two linear processes with values in an arbitrary Hilbert space to have the same convergence behavior. 
Let $\Hi$ and $\mathbb{E}$ be two Hilbert spaces and $\{\varepsilon_{j}\}_{ j \in \ZZ }$ a sequence of i.i.d.\ random variables with values in $\mathbb{E}$.
Define $\{X_{n}\}_{ n \in \ZZ }$ with $X_{n}=\sum_{j\in \ZZ} D_{nj} \varepsilon_{j}$ and $D_{nj} \in L(\Hi,\mathbb{E})$, the space of bounded linear operators from $\Hi$ to $\mathbb{E}$. Similarly, define $\{Y_{n}\}_{ n \in \ZZ }$ with $Y_{n}=\sum_{j\in \ZZ} D_{nj} \widetilde{\varepsilon}_{j}$,
where $D_{nj}$ is the same operator as in $X_{n}$ and $\widetilde{\varepsilon}_{j}$ is a sequence of Gaussian random elements with values in $\mathbb{E}$, zero mean and the same covariance operator as $\varepsilon_{j}$.
The notation $\| \cdot \|_{op}$ stands for the operator norm.
\begin{lemma} \label{Lemma_Suquet_Rack}
If
\begin{equation} \label{conditions_Suquet_Rack}
\lim_{n \to \infty} \sup_{j \in \ZZ} \Vert D_{nj} \Vert_{op}=0 
\hspace*{0.2cm} \text{ and } \hspace*{0.2cm}
\limsup_{n \to \infty} \sum_{j \in \ZZ} \Vert D_{nj} \Vert_{op}^2<\infty,
\end{equation}
then
\begin{equation*}
\lim_{n \to \infty} \varrho_{3} (X_{n},Y_{n})=0,
\end{equation*}
where the metric $\varrho_{k}$ is defined by 
\begin{equation*}
\varrho_{k} (X,Y)=\sup_{f \in F_{k}} \left| \E f(X) - \E f(Y) \right|,
\end{equation*}
for the set $F_{k}$ of all $k$ times Fr\`{e}chet differentiable functions $f:\Hi\rightarrow \RR$ such that\\
$\sup_{x \in \Hi} \vert f^{(i)}(x) \vert \leq 1$ for $i\in \{0,\dots,k\}$.
\end{lemma}
The proof of Lemma \ref{Lemma_Suquet_Rack} is given in \cite{Rack_Suquet_Op_2011}.
The processes $X_{n}$ and $Y_{n}$ have the same convergence behavior if $\lim_{n \to \infty} \varrho_{3} (X_{n},Y_{n})=0$, since the metric induces the weak topology on the set of probability measures on $\Hi$; see \cite{Gine_Leon_On_the_central_limit_theorem_in_Hilbert_space}.
\par
We next rewrite the normalized sample mean $\Sigma^{-1}_{N} \sum_{n=1}^{N} X_{n} $ as a linear process and prove for it the conditions \eqref{conditions_Suquet_Rack}.
Thus, let $\Sigma_{N}^{-1}=: (\Sigma_{kl,N})_{k,l=1,\dots,p}$, which exists since $\Var(a'S_{N}) \neq 0$ for 
$a \in \RR \backslash \{0\}$ by assumption.
For $\lambda= (\lambda_{1},\dots,\lambda_{p}) \in \RR^{p} $, write
\begin{equation} \label{equality_sequence_of_matrices}
\begin{aligned}
\lambda'\Sigma^{-1}_{N} \sum_{n=1}^{N} X_{n}
=	&	
\sum_{j\in \ZZ} \sum_{n=1}^{N} \sum_{k=1}^{p}
\sum_{i=1}^{p} \lambda_{i} \Sigma_{ik,N} \sum_{l=1}^{p}  
\psi_{kl,n-j} \varepsilon_{l,j}	\\
=:	&	
\sum_{l=1}^{p} \sum_{j\in \ZZ} B_{l,Nj} \varepsilon_{l,j}
=	\phantom{:}
\sum_{j\in \ZZ} \left( B_{1,Nj},\dots,B_{p,Nj} \right) 
\begin{pmatrix}
\varepsilon_{1,j}\\
\vdots\\
\varepsilon_{p,j}
\end{pmatrix}
=: 
\sum_{j \in \ZZ} B_{Nj}' \varepsilon_{j},
\end{aligned}
\end{equation}
which is the form needed to apply Lemma \ref{Lemma_Suquet_Rack}.
Since $B_{Nj} \in \RR^{p}$, the operator norm is $\Vert B_{Nj} \Vert_{op}=\max_{1 \leq l \leq p} \vert B_{l,Nj} \vert$.
To show that $B_{Nj}$ satisfies the conditions \eqref{conditions_Suquet_Rack}, we need the following auxiliary lemma concerning the variances of one entry of the sample mean,
\begin{equation}	\label{Definition_small_omega}
\omega^{2}_{kl}(N):=	
\E(\sum_{n=1}^{N} \sum_{j\in \ZZ} 
\psi_{kl, n-j}\varepsilon_{l,j})^2=	
\sum_{j\in \ZZ} (\sum_{n=1}^{N} \psi_{kl, n-j})^2.
\end{equation}

\begin{lemma}	\label{Lemma_convergence_to_zero}
The sequence of matrix entries 
$(\Sigma_{ik,N} \omega^{\frac{1}{2}}_{kl}(N))_{N \geq 1}$ 
converges to zero for each $k,l,i \in \{1,\dots,p\}$.
\end{lemma}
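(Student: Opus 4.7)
The two factors in the product have natural, mutually compensating scales: $\omega^{n}_{pq}$ is of the order of a standard deviation of the $q$-th innovation's contribution to $S_{n}^{(p)}$, while $\Sigma^{n}_{ip}$ is of the order of an inverse standard deviation. Heuristically then $(\omega^{n}_{pq})^{1/2}\Sigma^{n}_{ip}$ is of order $((\sigma^{n})^{2}_{pp})^{-1/4}$, which vanishes because each diagonal entry of $\Sigma_{n}^{2}$ is assumed to diverge. My plan is to turn this heuristic into two clean estimates, one for each factor, and then combine them.

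For the first estimate, since the innovations have identity covariance, the variance of the $p$-th component of $S_{n}$ decomposes over the innovation index:
\[
(\sigma^{n})^{2}_{pp}=\Var(S_{n}^{(p)})=\sum_{q=1}^{d}\sum_{j\in\Z}\Big(\sum_{k=1}^{n}a_{pq}^{j-k}\Big)^{2}=\sum_{q=1}^{d}(\omega^{n}_{pq})^{2},
\]
so $(\omega^{n}_{pq})^{2}\le (\sigma^{n})^{2}_{pp}$. For the second, I would take $\Sigma_{n}$ to be the symmetric positive definite square root of $\Sigma_{n}^{2}$ and write the spectral decomposition $\Sigma_{n}^{2}=U_{n}D_{n}U_{n}^{\prime}$ with $D_{n}=\operatorname{diag}(\lambda_{1}^{n},\ldots,\lambda_{d}^{n})$. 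Cauchy--Schwarz then gives
\[
(\Sigma^{n}_{ip})^{2}=\Big(\sum_{k}U_{n,ik}U_{n,pk}(\lambda_{k}^{n})^{-1/2}\Big)^{2}\le \sum_{k}U_{n,pk}^{2}(\lambda_{k}^{n})^{-1}=(\Sigma_{n}^{-2})_{pp}.
\]
Factoring $\Sigma_{n}^{2}=\Lambda_{n}C_{n}\Lambda_{n}$, where $\Lambda_{n}$ is the diagonal matrix of componentwise standard deviations and $C_{n}$ is the correlation matrix of $S_{n}$, identifies the right-hand side as $(C_{n}^{-1})_{pp}/(\sigma^{n})^{2}_{pp}$, so combining the two estimates,
\[
\big((\omega^{n}_{pq})^{1/2}|\Sigma^{n}_{ip}|\big)^{2}\le ((\sigma^{n})^{2}_{pp})^{1/2}\cdot\frac{(C_{n}^{-1})_{pp}}{(\sigma^{n})^{2}_{pp}}=\frac{(C_{n}^{-1})_{pp}}{((\sigma^{n})^{2}_{pp})^{1/2}}.
\]

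The main technical obstacle is to control $(C_{n}^{-1})_{pp}$ uniformly in $n$, i.e.\ to ensure that the smallest eigenvalue of the correlation matrix $C_{n}$ stays bounded away from zero as $n\to\infty$. This is the step where the non-degeneracy hypothesis $\Var(a^{\prime}X_{k})\ne 0$ for all $a\ne 0$ together with the summability $\sum_{j}\|A_{j}\|_{F}^{2}<\infty$ must genuinely be used, ruling out that a non-trivial linear combination of the components of $S_{n}$ degenerates asymptotically relative to the others. Once this uniform bound is in place, the right-hand side of the last display is $O\big(((\sigma^{n})^{2}_{pp})^{-1/2}\big)$ and tends to zero by the assumption on the diagonals of $\Sigma_{n}^{2}$, so that $(\omega^{n}_{pq})^{1/2}|\Sigma^{n}_{ip}|=O\big(((\sigma^{n})^{2}_{pp})^{-1/4}\big)\to 0$, completing the proof.
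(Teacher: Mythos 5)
Your two estimates are correct and, as far as they go, cleaner than what the paper does. The bound $(\omega^{n}_{pq})^{2}\le(\sigma^{n})^{2}_{pp}$ follows exactly as you say from $E(\varepsilon_{j}\varepsilon_{j}^{\prime})=I$, and the Cauchy--Schwarz inequality $(\Sigma^{n}_{ip})^{2}\le(\Sigma_{n}^{-2})_{pp}$ for the symmetric square root is a genuinely useful observation that the paper does not make explicit: the paper instead manipulates $\Sigma_{n}^{-2}$ entrywise and passes somewhat loosely between $\Sigma_{n}^{-2}$ and $\Sigma_{n}^{-1}$. Your overall strategy --- factor $\Sigma_{n}^{2}$ into the diagonal of variances times a correlation matrix, reduce everything to the divergence of $(\sigma^{n})^{2}_{pp}$, and isolate the correlation part as the only thing still needing control --- is the same strategy as the paper's, which writes $\Sigma_{n}^{2}=\operatorname{diag}(\sigma_{11}^{2}(n),\dots,\sigma_{dd}^{2}(n))\tilde{\Sigma}^{2}_{n}$ and concludes via $\omega^{n}_{pq}/(\sigma^{n})^{2}_{pp}\le((\sigma^{n})^{2}_{pp})^{-1/2}\to 0$.

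However, the step you flag as ``the main technical obstacle'' --- a bound on $(C_{n}^{-1})_{pp}$ uniform in $n$, equivalently a lower bound on the smallest eigenvalue of the correlation matrix of $S_{n}$ --- is left entirely unproved, and it is the only nontrivial content of the lemma beyond bookkeeping. The hypotheses you cite do not obviously deliver it: $\Var(a^{\prime}X_{k})\neq 0$ controls the covariance of a single $X_{k}$, not of the partial sum $S_{n}$, and one can imagine components whose partial sums become asymptotically collinear, in which case $(C_{n}^{-1})_{pp}$ diverges and your final display no longer closes without a rate comparison against $((\sigma^{n})^{2}_{pp})^{1/2}$. So the proposal as written is incomplete at exactly the decisive point. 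To be fair, the paper's own proof has essentially the same soft spot: it asserts without argument that $\tilde{\Sigma}^{2}_{n}$ converges to an invertible constant matrix $C_{d}$ (``the diagonal entries of a covariance matrix are of higher order than the off-diagonal ones''), which is the same unproved uniform non-degeneracy of the limiting correlation structure. If you want a complete argument you must either prove this eigenvalue bound from the model assumptions or add it as an explicit hypothesis; merely naming it does not discharge it.
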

\begin{proof}
Define the matrix $\Omega^2_{N}=(\Omega_{kl,N})_{k,l=1,\dots,p}$ by
\begin{equation*}
\Omega_{kl,N}=
\begin{cases}
1,			&\text{if } k=l,	\\
\frac{\sigma_{kl}^{2}(N)}{\sigma_{kk}(N)\sigma_{ll}(N)}, &\text{if } k \neq l,
\end{cases}	
\end{equation*}
so that $\Sigma_{N}^{2}=\operatorname{diag}(\sigma_{11}(N),\dots,\sigma_{pp}(N)) \Omega^2_{N} \operatorname{diag}(\sigma_{11}(N),\dots,\sigma_{pp}(N))$. Then, there is a matrix 
$C \in \RR^{p \times p}$, whose diagonal entries are equal to one and the off-diagonal elements are constants $c_{ij}$ such that $\lim _{N \to \infty} \Omega_{kl,N}=C$ componentwise. This implies
\begin{equation*}
\begin{aligned}
\lim_{N \to \infty}\Sigma_{N}^{-2}& =		
\lim_{N \to \infty}(\Sigma_{N}^{2})^{-1}	\\& =		
\lim_{N \to \infty} (\operatorname{diag}(\sigma_{11}(N),\dots,\sigma_{pp}(N)) \Omega^2_{N} 
\operatorname{diag}(\sigma_{11}(N),\dots,\sigma_{pp}(N)))^{-1} \\& =		
\lim_{N \to \infty} \operatorname{diag}\left( \frac{1}{\sigma_{11}(N)},\dots,\frac{1}{\sigma_{pp}(N)}\right) C^{-1} 
\operatorname{diag}\left( \frac{1}{\sigma_{11}(N)},\dots,\frac{1}{\sigma_{pp}(N)}\right),
\end{aligned}
\end{equation*}
since matrix inversion is a continuous transformation. This leads to
\begin{equation*}
\begin{aligned}
&		 	
\lim_{N \to \infty} (\Sigma_{ik,N}  \omega^{\frac{1}{2}}_{kl}(N))_{i,l=1,\dots,p}   
\\& =	
\big(\lim_{N \to \infty} \diag(\omega_{1l}(N),\dots,\omega_{pl}(N)) 
\Sigma_{N}^{-2}  \big)^{\frac{1}{2}} 	 
\\& =
\left(\lim_{N \to \infty} \operatorname{diag}\left( 
\frac{\omega_{1l}(N)}{\sigma_{11}(N)},\dots,\frac{\omega_{pl}(N)}
{\sigma_{pp}(N)}\right)  
C^{-1} \operatorname{diag}\left( \frac{1}{\sigma_{11}(N)},\dots,\frac{1}{\sigma_{pp}(N)}\right) \right) ^{\frac{1}{2}}	  
\\& =
\lim_{N \to \infty} \left( \frac{\omega_{kl}(N)\widetilde{c}_{ik}}
{\sigma_{kk}(N)\sigma_{ii}(N)} \right)_{i,l=1,\dots, p} 
\end{aligned}
\end{equation*}
with $C^{-1}=(\widetilde{c}_{ij})_{i,j=1,\dots, p}$ and finally
\begin{equation*}
\lim_{N \to \infty} \frac{ \omega_{kl}(N) }{\sigma_{kk}(N)\sigma_{ii}(N)}
=	
\lim_{N \to \infty} \frac{(\sum_{j\in \ZZ} (\sum_{n=1}^{N} \psi_{kl,n-j})^2)^{\frac{1}{2}}}{\sigma_{kk}(N)\sigma_{ii}(N)}
=		0
\end{equation*}
with $\sigma^2_{kk}(N) \sim \sum_{m=1}^{p}  \sum_{j\in \ZZ} ( \sum_{n=1}^{N} \psi_{km,n-j})^2$.
\end{proof}	
		
\begin{lemma} \label{Lemma_conditions_satisifed}
The sequence of matrices $(B_{Nj}')_{N \geq 1,j \in \ZZ}$ in \eqref{equality_sequence_of_matrices} satisfies the conditions \eqref{conditions_Suquet_Rack}.
\end{lemma}
\begin{proof}
To prove the conditions \eqref{conditions_Suquet_Rack}, we consider 
$\vert B_{l,Nj} \vert$ for each $l \in \{1,\dots, p\}$ instead of 
$\max_{1 \leq l \leq p} \vert B_{l,Nj} \vert$. Then,
\begin{equation*}
\sup_{j \in \ZZ} \vert B_{l,Nj} \vert
=			
\sup_{j \in \ZZ} \vert \sum_{n=1}^{N}  \sum_{i=1}^{p} \sum_{k=1}^{p} \lambda_{i} \Sigma_{ik,N}
 \psi_{kl,n-j} \vert	
\leq		
\sup_{j \in \ZZ} \sum_{i=1}^{p} \sum_{k=1}^{p} |\lambda_{i}| 
|\Sigma_{ik,N} \omega^{\frac{1}{2}}_{kl}(N) |
\frac{| \sum_{n=1}^{N} \psi_{kl,n-j} |}{\omega^{\frac{1}{2}}_{kl}(N)}.
\end{equation*}		
By using the inequality 
\begin{equation}	\label{inequality_omega}
\frac{| \sum_{n=1}^{N} \psi_{kl,n-j} |^2}{\omega^{2}_{kl}(N)}
\leq		
\frac{4}{\omega_{kl}(N)}
\left( \frac{ \sum_{j \in \ZZ} \psi^2_{kl,j}}{\omega_{kl}(N)}
+ \Big( \sum_{j \in \ZZ} \psi_{kl,j}^2\Big)^{\frac{1}{2}} \right)
\end{equation}
from \citet[Theorem 18.6.5.]{IbragimovLinnik}, we further get that
\begin{equation*}
\sup_{j \in \ZZ} \vert B_{l,Nj} \vert
\leq		
2 \sum_{i=1}^{p} \sum_{k=1}^{p} |\lambda_{i}| 
|\Sigma_{ik,N} \omega^{\frac{1}{2}}_{kl}(N) |
\left( \frac{ \sum_{j \in \ZZ} \psi_{kl,j}^2}{\omega_{kl}(N)}
+ \Big( \sum_{j \in \ZZ} \psi_{kl,j}^2\Big)^{\frac{1}{2}} \right) ^{\frac{1}{2}}
\to 		0,
\end{equation*}
as $N \to \infty$ since $\Sigma_{ik,N} \omega^{\frac{1}{2}}_{kl}(N) $ converges to zero for each 
$i,k,l \in \{1,\dots,p\}$ by Lemma \ref{Lemma_convergence_to_zero}.
This proves the first condition in \eqref{conditions_Suquet_Rack}.
The second condition holds, since
\begin{equation*}
\begin{aligned}
\limsup_{N \to \infty} \sum_{j \in \ZZ} |B_{l,Nj} |^2
& = 	
\limsup_{N \to \infty} \sum_{j \in \ZZ}
| \sum_{n=1}^{N}  \sum_{i=1}^{p} \lambda_{i} 
\sum_{k=1}^{p} \Sigma_{ik,N} \psi_{kl,n-j} |^2	\\	
& \leq  	
\limsup_{N \to \infty} \sum_{j \in \ZZ} \sum_{l=1}^{p}| 
\sum_{n=1}^{N}  \sum_{i=1}^{p} \lambda_{i} 
\sum_{k=1}^{p} \Sigma_{ik,N} \psi_{kl,n-j} |^2	
=	 		
\sum_{i=1}^{p} \lambda_{i}^2 < \infty,
\end{aligned}
\end{equation*}
where we used the fact that the variances of the normalized sample mean satisfy
\begin{equation}	\label{equality_variances}
\lambda' \lambda
=			
\lim_{N \to \infty} \E(\lambda' \Sigma_{N}^{-1} \sum_{n=1}^{N} X_{n})^2	
=			
\lim_{N \to \infty} \sum_{j \in \ZZ} \sum_{l=1}^{p}| 
\sum_{n=1}^{N}  \sum_{i=1}^{p} \lambda_{i} 
\sum_{k=1}^{p} \Sigma_{ik,N} \psi_{kl,n-j} |^2.
\end{equation}
\end{proof}
By Lemma \ref{Lemma_conditions_satisifed}, the variables $\lambda' \Sigma_{N}^{-1} \sum_{n=1}^{N} X_{n}$ behave like Gaussian. The variances are given by
\eqref{equality_variances}, so $\lambda' \Sigma_{N}^{-1} \sum_{n=1}^{N} X_{n}$ converges in distribution to $\lambda'Z$ where $Z$ follows the $\mathcal{N}(0,I_{p})$ distribution.

\subsection{Proof of Theorem \ref{Theorem_invarianceprinciple}}	
In order to prove Theorem \ref{Theorem_invarianceprinciple}, we first present an auxiliary result regarding the limit processes covariance structure (Section \ref{se:SampleMeanAr}). We then investigate the convergence of the finite-dimensional distributions and tightness in $D[0,1]^{p}$ (Sections \ref{se:SampleMeanCfdd} and \ref{se:SampleMeanTightness}), which establish Theorem \ref{Theorem_invarianceprinciple}.

\subsubsection{Auxiliary result} \label{se:SampleMeanAr}
We examine the asymptotic covariance structure in the following auxiliary result.
\begin{lemma} \label{Lemma_cross_cov_mixture_sample_mean}
Under the assumptions in Theorem \ref{Theorem_invarianceprinciple},
\begin{equation*}
\lim_{N \to \infty} \Cov(A^{-1} _{N}(H)S_{\lfloor Nt \rfloor}, A^{-1}_{N}(H)S_{\lfloor Nu \rfloor})
=			
\Cov(\mathcal{G}(t),\mathcal{G}(u)),
\end{equation*}
where $\{\mathcal{G}(t)\}_{t \in [0,1] }$ is defined in 
\eqref{equation_limit_process_G_sample_mean_process}.
Furthermore, $A_{N}(H)=\operatorname{diag}(N^{H},N^{\frac{1}{2}}I_{p_{2}})$.
\end{lemma}
\begin{proof}
The proof is divided into three parts: (i) we examine the covariances between the long-range dependent components, (ii) the covariances between the short-range dependent components and (iii), we consider the mixture terms. For each part, note that interchanging the order of summation and assuming $t<u$ leads to
\begin{equation}	\label{eqaution_covariance_sample_mean_general}
\begin{aligned}
& \E(\sum_{n=1}^{\lfloor Nt \rfloor} X_{k,n}
\sum_{n=1}^{\lfloor Nu \rfloor} X_{l,n})	
\\&=	
\lfloor Nt \rfloor \gamma_{kl}(0) +
\sum_{n=1}^{\lfloor Nt \rfloor} \left( \lfloor Nt \rfloor - n \right) 
(\gamma_{kl}(n)+	\gamma_{lk}(n)) +
\sum_{n=1}^{m_{N}-1}n\gamma_{kl}(n)	 
\\& \hspace{1cm}	
+ m_{N}  \sum_{n=m_{N}}^{\lfloor Nu \rfloor -m_{N}}\gamma_{kl}(n)+ 
\sum_{n=\lfloor Nu \rfloor-m_{N}+1}^{\lfloor Nu \rfloor-1}
\left( \lfloor Nu \rfloor - n \right)\gamma_{kl}(n),
\end{aligned}
\end{equation}	
where $m_{N}=\min(\lfloor Nt \rfloor,\lfloor Nu \rfloor-\lfloor Nt \rfloor)$.
\par
\textit{Part (i):} By Proposition \ref{Proposition_mLRD}, the underlying process $\{X^{\operatorname{L}}_{n}\}_{n \in \ZZ}$ satisfies \eqref{equality_Definition_KP_multivariate_stationary_lrd} with \eqref{equation_Rij}. The proof follows by applying 
\eqref{eqaution_covariance_sample_mean_general} and similar arguments as in the univariate case (see for example the proof of Proposition 2.8.8 in \cite{PipirasTaqqu}) to each component so that
\begin{equation*}
\begin{aligned}
&		
\lim_{N \to \infty} \E(N^{-H} \sum_{n=1}^{\lfloor Nt \rfloor} X^{\operatorname{L}}_{n}
(N^{-H} \sum_{n=1}^{\lfloor Nu \rfloor} X^{\operatorname{L}'}_{n}) )	\\
& =		
\lim_{N \to \infty} \left( N^{-(1+d_{k}+d_{l})}
\E(	\sum_{n=1}^{\lfloor Nt \rfloor} X_{k,n}
\sum_{n=1}^{\lfloor Nu \rfloor} X_{l,n})	\right) _{k,l=1,\dots,p_{1}}		\\
& =		
\Bigg( 		  
\frac{1}{(d_{k}+d_{l})(1+d_{k}+d_{l})}	\\
&  
\phantom{=\Bigg( }	
(R_{kl}t^{1+d_{k}+d_{l}}+R_{lk}u^{1+d_{k}+d_{l}}-
R_{kl}(t-u)|t-u|^{1+d_{k}+d_{l}})	\Bigg) _{k,l=1,\dots,p_{1}}	
\\& =			
C_{H}^{-1}(t^{H}\widetilde{R}t^{H}+u^{H}\widetilde{R}'u^{H}-|t-u|^{H}\widetilde{R}(t-u)|t-u|^{H}) C_{H}^{-1},
\end{aligned}
\end{equation*}
where $C_{H}=H-\frac{1}{2}I_{p_{1}}$ with $H=D_{p_{1}}+\frac{1}{2}I_{p_{1}}$, $R_{kl}$ is defined by \eqref{equation_Rij} and
\begin{equation*}
R_{kl}(t)=
\begin{cases}
R_{kl},	\hspace*{0.2cm} \text{if }	t>0,\\
R_{lk},	\hspace*{0.2cm} \text{if }	t<0.
\end{cases}
\end{equation*}
The matrix $\widetilde{R}$ is given in \eqref{equality_tildeRij} and depends on 
the parameters $\widetilde{c}_{i,kl}$, $i=1,2,3$, which are defined in terms of 
the matrices $A^{+},A^{-}$ arising in (L).
Using the basic properties of the beta and gamma function gives
\begin{equation*}
\frac{1}{(d_{k}+d_{l})(1+d_{k}+d_{l})} R_{kl} 
=	
\frac{1}{(h_{k}-\frac{1}{2})(h_{l}-\frac{1}{2})} \widetilde{R}_{kl},
\end{equation*}
since $h_{k}=d_{k}+\frac{1}{2}$.
\par
\textit{Part (ii):} By Proposition \ref{Proposition_mSRD}, the autocovariances of the process $\{X^{\operatorname{S}}_{n}\}_{n \in \ZZ}$ are absolutely summable. Then, the relation \eqref{eqaution_covariance_sample_mean_general} and standard arguments under univariate short-range dependence (see e.g. \citet[Proposition 3.3.1]{giraitis}) yield
\begin{align*}
&		
\lim_{N \to \infty} \E(N^{-\frac{1}{2}} \sum_{n=1}^{\lfloor Nu \rfloor} X^{\operatorname{S}}_{n}
(N^{-\frac{1}{2}} \sum_{n=1}^{\lfloor Nt \rfloor} X^{\operatorname{S}}_{n})' )	\\
& =	
\lim_{N \to \infty}N^{-1} \left( \lfloor Nt \rfloor \gamma_{kl}(0) +
\sum_{n=1}^{\lfloor Nt \rfloor} \left( \lfloor Nt \rfloor - n \right)	
(\gamma_{kl}(n)+	\gamma_{lk}(n))	\right)_{k=p_{1}+1,\dots,p; l=1,\dots,p_{1}}		\\
& =		
t \sum_{n \in \ZZ} \Gamma_{p_{2},n}
\end{align*}
for $t<u$, since $\Gamma_{p_{2},-n}=\Gamma_{p_{2},n}'$.
\par
\textit{Part (iii):}
For the covariances between the sample means of $\{X^{\operatorname{L}}_{n}\}_{n \in \ZZ}$ and $\{X^{\operatorname{S}}_{n}\}_{n \in \ZZ}$, we distinguish two cases: $d_{k}+d_{l}<0$ and 
$d_{k}+d_{l} \geq 0$, where $d_{k}$ are associated with the components satisfying (L) and $d_{l}$ with the components satisfying (S).
When $d_{k}+d_{l}<0$, the autocovariances are absolutely 
summable following the proof of Proposition \ref{Proposition_mSRD}, so that
\begin{equation*}
\begin{aligned}
N^{-(1+d_{k})} \sum_{n=1}^{\lfloor Nt \rfloor} \left( \lfloor Nt \rfloor - n \right)
\gamma_{kl}(n)
&=	
N^{-(1+d_{k})} \sum_{n=1}^{\lfloor Nt \rfloor} \left( \lfloor Nt \rfloor - n \right)
\sum_{j \in \ZZ } \sum_{m=1}^{p} \psi_{km,j} \psi_{lm,j+n} 	\\
&\sim	
t \sum_{m=1}^{p} N^{-d_{k}} \sum_{n=1}^{\infty} \sum_{j \in \ZZ} 
			\psi_{km,j} \psi_{lm,j+n} \to 0.
\end{aligned}
\end{equation*}
When $d_{k}+d_{l} \geq 0$, we consider only the summand
$\sum_{n=1}^{\lfloor Nt \rfloor} \left( \lfloor Nt \rfloor - n \right) \gamma_{kl}(n)$ in \eqref{eqaution_covariance_sample_mean_general} in detail since the others can be dealt with analogously.
Write the $kl$-th component of the autocovariance function as
\begin{equation} \label{eq:componentautocov}
\begin{aligned}
\gamma_{kl}(n)
&=\phantom{:}	
\sum_{m=1}^{p} \left( 
\sum_{j = -\infty}^{-n-1} \psi_{km,j} \psi_{lm,j+n} +
\sum_{j = -n}^{0} \psi_{km,j} \psi_{lm,j+n} +
\sum_{j =0}^{\infty} \psi_{km,j} \psi_{lm,j+n}	\right) \\
&=:	
\gamma_{1,kl}(n)+\gamma_{2,kl}(n)+\gamma_{3,kl}(n).
\end{aligned}
\end{equation}
Recall that $k \in \{1,\dots, p_{1}\}$ for the long-range dependent components and $l \in \{p_{1}+1,\dots, p\}$ for the short-range dependent components. For $\gamma_{1,kl}(n)$, we have
\begin{equation*}
\begin{aligned}
N^{-(1+d_{k})} \sum_{n=1}^{\lfloor Nt \rfloor} \left( \lfloor Nt \rfloor - n \right)
\gamma_{1,kl}(n)
&	=		
N^{-(1+d_{k})} \sum_{n=1}^{\lfloor Nt \rfloor} \left( \lfloor Nt \rfloor - n \right)
\sum_{j = -\infty}^{-n-1} \sum_{m=1}^{p}
\psi_{km,j} \psi_{lm,j+n} 	\\
&	=		
\sum_{m=1}^{p} N^{-(1+d_{k})} 
\sum_{n=1}^{\lfloor Nt \rfloor} \left( \lfloor Nt \rfloor - n \right)
\sum_{j = -\infty}^{-1} 
\psi_{km,j-n} \psi_{lm,j}	\\
& \sim	
\sum_{m=1}^{p} 
\alpha_{km}^{-} \frac{1}{d_{k}(d_{k}+1)}t^{d_{k}+1} \sum_{j = -\infty}^{-1} \psi_{lm,j}.
\end{aligned}			
\end{equation*}
For $\gamma_{2,kl}(n)$, we get
\begin{equation*}
\begin{aligned}
\gamma_{2,kl}(n)
&=		
\sum_{j = -n}^{0} \sum_{m=1}^{p}
\psi_{km,j} \psi_{lm,j+n} 	
=		
\sum_{m=1}^{p} 
\sum_{j = 0}^{n}
\psi_{km,j-n} \psi_{lm,j}	\\
&=		
\sum_{m=1}^{p} 
\sum_{j = 0}^{n} 
(\psi_{kl,j-n} -\psi_{km,-n}) \psi_{lm,j} 
+\sum_{m=1}^{p} \sum_{j = 0}^{n}\psi_{kl,-n} \psi_{lm,j}.
\end{aligned}
\end{equation*}
The last term in this expression determines the limit as
\begin{equation*}
\sum_{m=1}^{p} N^{-(1+d_{k})} 
\sum_{n=1}^{\lfloor Nt \rfloor} \left( \lfloor Nt \rfloor - n \right)
\psi_{km,-n} \sum_{j = 0}^{n} \psi_{lm,j} 
\sim		
\sum_{m=1}^{p} \alpha^{-}_{km} \frac{1}{d_{k}(d_{k}+1)} t^{d_{k}+1}
\sum_{j = 0}^{\infty} \psi_{lm,j},
\end{equation*}
whereas the other term is asymptotically negligible since
\begin{align*}
		&	|N^{-(1+d_{k})} 
			\sum_{n=1}^{\lfloor Nt \rfloor} \left( \lfloor Nt \rfloor - n \right)
			\sum_{j = 0}^{n} 
			(\psi_{km,j-n} -\psi_{lm,-n}) \psi_{lm,j}|	\\
& \sim		|N^{-(1+d_{k})} 
			\sum_{n=1}^{\lfloor Nt \rfloor} \left( \lfloor Nt \rfloor - n \right)
			\sum_{j = 0}^{n} 
			\alpha_{km}^{-}((n-j)^{d_{k}-1} -n^{d_{k}-1}) j^{d_{l}-1}C_{lm}(j)|	\\
& \leq		N^{-(1+d_{k})} 
			\sum_{n=1}^{\lfloor Nt \rfloor} \left( \lfloor Nt \rfloor - n \right)
			\sum_{j = 0}^{n} 
			|\alpha_{km}^{-}\beta |((n-j)^{d_{k}-1} -n^{d_{k}-1}) j^{d_{l}-1}	\\
& \sim		N^{-(1+d_{k})} 
			\sum_{n=1}^{\lfloor Nt \rfloor} \left( \lfloor Nt \rfloor - n \right) n^{d_{k}+d_{l}-1}
			|\alpha_{km}^{-}\beta | \int_{0}^{1} ((1-x)^{d_{k}-1} -1) x^{d_{l}-1} dx
\to		0,
\end{align*}
where the integral is finite since $d_{k}+d_{l} \geq 0$ (see \citet[p.\ 315]{gradshteyn2007}). For $\gamma_{3,kl}(n)$ note that
\begin{equation*}
			\gamma_{3,kl}(n)
	=		\sum_{m=1}^{p} 
			\sum_{j = 0}^{\infty}
			\psi_{km,j} \psi_{lm,j+n} 	
	=		\sum_{m=1}^{p} 
			\sum_{j = 0}^{\infty}
			C_{km}(j) j^{d_{k}-1} C_{lm}(j+n) (j+n)^{d_{l}-1}
\end{equation*}
and
\begin{equation*}
\begin{aligned}
			&		|\sum_{m=1}^{p} N^{-(1+d_{k})} 
					\sum_{n=1}^{\lfloor Nt \rfloor} (\lfloor Nt \rfloor-n)
					\sum_{j = 0}^{\infty}
					C_{km}(j) j^{d_{k}-1} C_{lm}(j+n) (j+n)^{d_{l}-1}|	\\
	&	\sim		
					|\sum_{m=1}^{p} N^{-(1+d_{k})} 
					\sum_{n=1}^{\lfloor Nt \rfloor} (\lfloor Nt \rfloor-n)
					\sum_{j = 0}^{\infty}
					\alpha_{km}^{+} j^{d_{k}-1} C_{lm}(j+n) (j+n)^{d_{l}-1}|		\\
	&	\leq		
					\sum_{m=1}^{p} N^{-(1+d_{k})} 
					\sum_{n=1}^{\lfloor Nt \rfloor} (\lfloor Nt \rfloor-n)
					|\alpha_{km}^{+} \beta | \sum_{j = 0}^{\infty}
					j^{d_{k}-1}  (j+n)^{d_{l}-1}	\\
	&	\sim 		
					\sum_{m=1}^{p} N^{-(1+d_{k})} 
					\sum_{n=1}^{\lfloor Nt \rfloor} (\lfloor Nt \rfloor-n)
					|\alpha_{km}^{+} \beta | n^{d_{k}+d_{l}-1} 
					\int_{0}^{\infty} x^{d_{k}-1}  (x+1)^{d_{l}-1} dx \to 		0 ,
\end{aligned}
\end{equation*}
where the integral is finite since $d_{k}+d_{l}<1$.
Combining the results for $\gamma_{1,kl}(n),\gamma_{2,kl}(n)$ and $\gamma_{3,kl}(n)$ yields
\begin{equation*}
N^{-(1+d_{k})} 
\sum_{n=1}^{\lfloor Nt \rfloor} \left( \lfloor Nt \rfloor - n \right) \gamma_{kl}(n)\sim
\sum_{m=1}^{p} \frac{1}{d_{k}(d_{k}+1)} t^{d_{k}+1} 
\alpha^{-}_{km} \sum_{j \in \ZZ} \psi_{lm,j}.
\end{equation*}
\par
Dealing similarly with the other summands in \eqref{eqaution_covariance_sample_mean_general} gives
\begin{equation*}
\begin{aligned}
&	
\lim_{N \to \infty} \E(N^{-H} \sum_{n=1}^{\lfloor Nt \rfloor} X^{\operatorname{L}}_{n}
N^{-\frac{1}{2}} \sum_{n=1}^{\lfloor Nu \rfloor} X^{\operatorname{S}'}_{n})		\\
&	=		
C_{H}^{-1} \widetilde{H}^{-1} (
t^{\widetilde{H}} A^{-} +
u^{\widetilde{H}} A^{+}-
|t-u|^{\widetilde{H}} A(t-u))
\left( \sum_{j \in \ZZ} \psi_{lm,j} \right)_{\substack{m = 1,\dots,p \\ l = p_{2},\dots,p }},	
\end{aligned}
\end{equation*}
where $C_{H}=H-\frac{1}{2}I_{p_{1}}$ with $H=D_{p_{1}}+\frac{1}{2}I_{p_{1}}$, $\widetilde{H}=D_{p_{1}}+I_{p_{1}}$ and 
\begin{equation*}
A(t)=
\begin{cases}
A^{+},	\hspace{0.2cm}	&\text{if } t>0,\\
A^{-}	,	\hspace{0.2cm}	&\text{if } t<0.
\end{cases}
\end{equation*}
\end{proof}

We conclude the section with a comment on the properness of the process $\{\mathcal{G}(t)\}_{t \in [0,1]}$, since it is a consequence of the previous Lemma \ref{Lemma_cross_cov_mixture_sample_mean}.
The matrices $A^{+}$, $A^{-}$ and $\Lambda:=\sum_{j \in \ZZ} (\psi_{kl,j})_{k=p_{1}+1,\dots,p;l=1,\dots,p}$ are assumed to have full rank by condition (L) and (S), respectively. For this reason, $A^{+}(A^{+})'$, $A^{-}(A^{-})'$ and 
$\Lambda \Lambda'$ are positive definite. This and the positive-semi definiteness of $\E \mathcal{G}(t)\mathcal{G}'(t)$ imply, that 
$\E \mathcal{G}(t) \mathcal{G}'(t)$ is positive definite. So, one can infer that $\{\mathcal{G}(t)\}_{t \in [0,1]}$ is proper.


\subsubsection{Convergence of the finite-dimensional distributions} \label{se:SampleMeanCfdd}
We prove the convergence of the finite-dimensional distributions by adapting the proof of Theorem \ref{Theorem_general_multivariate_linear_process}.
It is enough to verify that
\begin{equation} \label{eq:fddsamplemaen}
\lambda'A^{-1}_{N}(H) S_{\lfloor Nt \rfloor} 
\overset{f.d.d.}{\longrightarrow} 
\lambda' \mathcal{G}(t),
\end{equation}
where $\mathcal{G}(t)$ is defined in \eqref{equation_limit_process_G_sample_mean_process} and $\lambda \in \RR^{p}$. The left-hand side of \eqref{eq:fddsamplemaen} can be written as
\begin{equation*}
	\lambda' A^{-1}_{N}(H) S_{\lfloor Nt \rfloor}
=	\sum_{j\in \ZZ}  \sum_{k=1}^{p}
	\sum_{n=1}^{\lfloor Nt \rfloor}  \lambda_{k} 
	a^{-1}_{k}(N) \sum_{l=1}^{p}  \psi_{kl,n-j} \varepsilon_{l,j}
=:	\sum_{l=1}^{p} \sum_{j\in \ZZ} B_{l,Nj}(t) \varepsilon_{l,j},
\end{equation*}
where
\begin{equation*}
a_{k}(N)=
\begin{cases}
N^{\frac{1}{2}+d_{k}},	\hspace{0.2cm}&\text{if }	k \in \{1,\dots,p_{1} \}	,	\\
N^{\frac{1}{2}},	 			\hspace{0.2cm}&\text{if }	k \in \{p_{1}+1,\dots,p \}.
\end{cases}
\end{equation*}
For the convergence of the finite-dimensional distributions, consider
\begin{equation*}
\begin{aligned}
&	
\Big( \sum_{l=1}^{p} \sum_{j \in \ZZ} B_{l,Nj}(t_{1}) \varepsilon_{l,j},\dots,
\sum_{l=1}^{p} \sum_{j \in \ZZ} B_{l,Nj}(t_{z}) \varepsilon_{l,j} \Big)	\\
& =			
\sum_{j \in \ZZ} 
\begin{pmatrix}
B_{1,Nj}(t_{1})	&\dots		&  B_{p,Nj}(t_{1})\\
\vdots			&\ddots	&	\vdots \\
B_{1,Nj}(t_{z})	&\dots		& B_{p,Nj}(t_{z})
\end{pmatrix}						
\begin{pmatrix}
\varepsilon_{1,j}\\
\vdots	\\
\varepsilon_{p,j}
\end{pmatrix}
=:		
\sum_{j \in \ZZ}	B_{Nj} \varepsilon_{j},
\end{aligned}
\end{equation*}
where $t_{1},\dots, t_{z} \in [0,1]$ and $z \in \NN$. This representation allows us to proceed as in the proof of Theorem \ref{Theorem_general_multivariate_linear_process}.
\begin{lemma} \label{le:sequences}
The sequence of matrices $(B_{Nj})_{N \geq 1,j \in \ZZ}$ satisfies the conditions \eqref{conditions_Suquet_Rack}.
\end{lemma}
		
\begin{proof}
The operator norm of the matrix $B_{Nj}$ is given by
\begin{equation*}
\| B_{Nj} \|_{op}= \max_{1 \leq l \leq p} \sum_{i=1}^{z}
| B_{l,Nj}(t_{i}) |.
\end{equation*}
It is enough to prove the statement for $| B_{l,Nj}(t) |$ for all $l \in \{1,\dots,p\}$ and $t \in [0,1]$. By using the inequality \eqref{inequality_omega},
\begin{equation*}
\frac{| \sum_{n=1}^{\lfloor Nt \rfloor} \psi_{kl,n-j} |^2}
{ \omega^{2}_{kl}(\lfloor Nt \rfloor)}
\leq		
\frac{4}{\omega_{kl}(\lfloor Nt \rfloor)}
\left( \frac{\sum_{j \in \ZZ} \psi_{kl,j}^2}{\omega_{kl}(\lfloor Nt \rfloor)}
+ \Big(\sum_{j \in \ZZ} \psi_{kl,j}^2\Big)^{2} \right).
\end{equation*}
Then,
\begin{equation*}
\begin{aligned}
&	
\sup_{j \in \ZZ} |B_{l,Nj}(t)|
=			
\sup_{j \in \ZZ} | \sum_{n=1}^{\lfloor Nt \rfloor}  \sum_{k=1}^{p} \lambda_{k} a^{-1}_{k}(N)	\psi_{kl,n-j} |
\\ & \leq		
4\sum_{k=1}^{p} |\lambda_{k}| |a^{-1}_{k}(N) \omega^{\frac{1}{2}}_{kl}(N) |
\left( \frac{\omega_{kl}(\lfloor Nt \rfloor)}{\omega_{kl}(N)} \right) ^{\frac{1}{2}} 
\left( \frac{\sum_{j \in \ZZ} \psi_{kl,j}^2}{\omega_{kl}(\lfloor Nt \rfloor)}
				+ \Big( \sum_{j \in \ZZ} \psi_{kl,j}^2 \Big)^{\frac{1}{2}} \right) ^{\frac{1}{2}}	
	\to		0,
\end{aligned}
\end{equation*}
since $|a^{-1}_{k}(N) \omega^{\frac{1}{2}}_{kl}(N) | $ converges to zero for each 
$i,k,l \in \{1,\dots,p\}$ by Lemma \ref{Lemma_convergence_to_zero}.
Moreover, for $\omega^{2}_{kl}(N)$ given in \eqref{Definition_small_omega}, 
\begin{equation*}
\frac{\omega^{2}_{kl}(\lfloor Nt \rfloor)}{\omega^{2}_{kl}(N)}
\end{equation*}
is bounded for the long- as well as for the short-range dependent components by Lemma \ref{Lemma_cross_cov_mixture_sample_mean}. The second condition of Lemma \ref{Lemma_Suquet_Rack} follows also by Lemma \ref{Lemma_cross_cov_mixture_sample_mean}, since
\begin{equation*}
\begin{aligned}
			&	\limsup_{N \to \infty} \sum_{j \in \ZZ} |B_{l,Nj}(t)|^2	
	=			\limsup_{N \to \infty} \sum_{j \in \ZZ} | \sum_{n=1}^{\lfloor Nt \rfloor}  
				\sum_{k=1}^{p} \lambda_{k}  a^{-1}_{k}(N) \psi_{kl,n-j} |^2	\\	
 & 	\leq 			\limsup_{N \to \infty} \sum_{j \in \ZZ} \sum_{l=1}^{p}| 
				\sum_{n=1}^{\lfloor Nt \rfloor}  \sum_{k=1}^{p} \lambda_{k} 
				a^{-1}_{k}(N)\psi_{kl,n-j} |^2	
	=			\limsup_{N \to \infty}	\lambda' E| A^{-1}_{N}(H) 
				S_{\lfloor Nt \rfloor} |^2	\lambda
	< \infty.
\end{aligned}
\end{equation*}
\end{proof}
By Lemma \ref{le:sequences} the process $A_{N}^{-1}(H)S_{\lfloor Nt \rfloor}$ can be treated as linear with Gaussian innovations. By \citet[Chapter I, Section 3, Theorem 4]{GikhmanSkorokhod1969} it suffices to establish the componentwise convergence behavior of the cross-covariances as we did in Lemma \ref{Lemma_cross_cov_mixture_sample_mean}. So, the sample mean process converges to the multivariate Gaussian process $\{\mathcal{G}(t)\}_{t \in [0,1]}$ 
defined in \eqref{equation_limit_process_G_sample_mean_process}.

\subsubsection{Tightness} \label{se:SampleMeanTightness}
By \citet[Lemma 1]{BaiTaqqu}, it suffices to prove tightness of each component. Each component is a sum of a sample mean process of univariate linear processes,
\begin{equation*}
S_{\lfloor Nt \rfloor}=
\sum_{l=1}^{p} \sum_{n=1}^{\lfloor Nt \rfloor} \sum_{j \in \ZZ} 
\psi_{kl,j} \varepsilon_{l,n-j}.
\end{equation*}
Since sums of tight processes are tight (see \cite{suquet1999tightness}), what remains is to prove tightness for the sample mean process of a univariate linear process
\begin{equation*}
\sum_{n=1}^{\lfloor Nt \rfloor} \sum_{j \in \ZZ} 
\psi_{kl,j} \varepsilon_{l,n-j}.
\end{equation*}
For the long-range dependent components, this follows by Proposition 4.4.2 in \cite{giraitis} and for the short-range dependent ones by Proposition 4.4.4 in \cite{giraitis} and the assumption $\E\| \varepsilon_{0} \|^{2+\delta}<\infty$ for some $\delta>0$.

\subsection{Proof of Theorem \ref{Theorem_Mixture_SRD_LRD}}
As in the previous section, we first give some auxiliary results regarding the limit processes covariance structure (Section \ref{s:AR_SampleA}). We then investigate the convergence of the finite-dimensional distributions and tightness (Sections \ref{s:fdd_SampleA} and \ref{s:Tight_SampleA}), which establish Theorem \ref{Theorem_Mixture_SRD_LRD}. In Section \ref{s:Properties_SampleA} we investigate the properties of the process $\{Z(t)\}_{t\in \RR}$ defined in \eqref{equality_limit_process_non_Gaussian}.

\subsubsection{Auxiliary results} \label{s:AR_SampleA}
Lemma \ref{Lemma_Gaussian_convergence_covariance} provides the limiting covariance structure for the components which are short-range dependent.
\begin{lemma} \label{Lemma_Gaussian_convergence_covariance}
Let $\widehat{\Gamma}_{N,\ell}^{\operatorname{S}}(t)$ be defined by \eqref{eq:sampleSL}. Then, for $\ell_{1},\ell_{2} \geq 0$ and $t,u \in [0,1]$
\begin{equation*}
\lim_{N \to \infty} N\Cov(\widehat{\Gamma}^{\operatorname{S}}_{N,\ell_{1}}(t),
\widehat{\Gamma}^{\operatorname{S}}_{N,\ell_{2} }(u)) 
=	
\Cov(G^{\operatorname{S}}_{\ell_{1}}(t),G^{\operatorname{S}}_{\ell_{2}}(u))	,
\end{equation*}					
where the right-hand side is given in \eqref{equality_covariance_Gaussian_Ishort}.
\end{lemma}
\begin{proof}
First, note that
\begin{equation} \label{eq:epsiloncomb}
\E(\vecop(\varepsilon_{i_{1}}\varepsilon_{i_{2}}')
(\vecop(\varepsilon_{i_{3}}\varepsilon_{i_{4}}'))')
= 
\begin{cases}
\sigma^{*}	,					
&\hspace{0.2cm} i_{1}=i_{2}=i_{3}=i_{4},	\\
\vecop(I_{p})	(\vecop(I_{p})	)',			
&\hspace{0.2cm} i_{1}=i_{2} \neq i_{3}=i_{4},	\\
I_{p^2},		
&\hspace{0.2cm} i_{1}=i_{3} \neq i_{2}=i_{4},	\\
K_{p},		
&\hspace{0.2cm} i_{1}=i_{4} \neq i_{2}=i_{3},	\\
0,																			
&\hspace{0.2cm} i_{1} \neq i_{2}\neq i_{3}\neq i_{4}
\end{cases}
\end{equation}
with $\sigma^{*}
:=
\E(\vecop(\varepsilon_{i}\varepsilon_{i}')
(\vecop(\varepsilon_{i}\varepsilon_{i}'))')$. We investigate the covariances
\begin{equation*}
\begin{aligned}
&	
\Cov(\widehat{\Gamma}^{\operatorname{S}}_{N,\ell_{1}}(t),
\widehat{\Gamma}^{\operatorname{S}}_{N,\ell_{2}}(u))	\\
&  	= 				
\E\left( \frac{1}{N^2} \sum_{n=1}^{\lfloor Nt \rfloor} 
\sum_{l=1}^{\lfloor Nu \rfloor} 
\vecop_{I_{\operatorname{S}}}(X_{n} X_{n+\ell_{1}}')
(\vecop_{I_{\operatorname{S}}}(X_{l} X_{l+\ell_{2}}'))' \right) -
\frac{\lfloor Nt \rfloor \lfloor Nu \rfloor}{N^2} 
\Gamma^{\operatorname{S}}_{\ell_{1}} (\Gamma^{\operatorname{S}}_{\ell_{2}})'.
\end{aligned}
\end{equation*}
Setting $r=l-n$ and by \eqref{eq:epsiloncomb}, we have
\begin{equation*}
\begin{aligned}
&		
\E(\vecop_{I_{\operatorname{S}}}(X_{n} X_{n+\ell_{1}}')
(\vecop_{I_{\operatorname{S}}}(X_{l} X_{l+\ell_{2}}'))')	\\
& =	
E_{I_{\operatorname{S}}} \sum_{i_{1},i_{2},i_{3},i_{4} \in \ZZ}
(\VPsi_{i_{2}+\ell_{1}} \otimes \VPsi_{i_{1}})
\E(
\vecop(\varepsilon_{n-i_{1}} \varepsilon_{n-i_{2}} ')
(\vecop(\varepsilon_{n-i_{3}} \varepsilon_{n-i_{4}}'))' )
(\VPsi_{i_{4}+r} \otimes \VPsi_{i_{3}+r+\ell_{2}})'
E_{I_{\operatorname{S}}}'	\\
& =			
 \sum_{i \neq j} ( 
\vecop_{I_{\operatorname{S}}}(\VPsi_{i} \VPsi_{i+\ell_{1}}')
(\vecop_{I_{\operatorname{S}}}(\VPsi_{j+r+\ell_{2}} \VPsi_{j+r}'))'+
E_{I_{\operatorname{S}}} (\VPsi_{i+\ell_{1}}\VPsi_{i+r}' \otimes  \VPsi_{j}\VPsi_{j+r+\ell_{2}}') 
E_{I_{\operatorname{S}}}'
\\ & \quad +
E_{I_{\operatorname{S}}} K_{p} (\VPsi_{i}  \VPsi_{i+r}' \otimes 
\VPsi_{j+\ell_{1}}  \VPsi_{j+r+\ell_{2}}') E_{I_{\operatorname{S}}}'
+	
E_{I_{\operatorname{S}}} \sum_{i\in \ZZ} 
(\VPsi_{i+\ell_{1}} \otimes \VPsi_{i}) \sigma^{*} 
(\VPsi_{i+r} \otimes \VPsi_{i+r+\ell_{2}} )' 
E_{I_{\operatorname{S}}}'
\\
& =		
\Gamma^{\operatorname{S}}_{\ell_{1}} (\Gamma^{\operatorname{S}}_{\ell_{2}})'+
E_{I_{\operatorname{S}}}(\Gamma_{r+\ell_{2}} \otimes \Gamma_{r-\ell_{1}})E_{I_{\operatorname{S}}}'+
E_{I_{\operatorname{S}}} K_{p} (\Gamma_{r} \otimes \Gamma_{r+\ell_{2}-\ell_{1}}) 
E_{I_{\operatorname{S}}}' 
\\&\quad	+	
E_{I_{\operatorname{S}}} \sum_{i\in \ZZ} 
(\VPsi_{i+\ell_{1}} \otimes \VPsi_{i}) (\sigma^{*} - \vecop(I_{p})	(\vecop(I_{p})	)' - I_{p^2} - K_{p})
(\VPsi_{i+r} \otimes \VPsi_{i+r+\ell_{2}} )' E_{I_{\operatorname{S}}}'.
\end{aligned}
\end{equation*}
Interchanging the order of summation, assuming $t<u$ and defining $m_{N}=\min(\lfloor Nt \rfloor,\lfloor Nu \rfloor-\lfloor Nt \rfloor)$, we get
\begin{equation} \label{eq:shortCovTr}
\begin{aligned}
\lim_{N \to \infty} 
N\Cov(\widehat{\Gamma}^{\operatorname{S}}_{N,\ell_{1}}(t),
\widehat{\Gamma}^{\operatorname{S}}_{N,\ell_{2}}(u))	
& =	
 \frac{\lfloor Nt \rfloor}{N}\sum_{|r|<\lfloor Nt \rfloor}\left( 1-\frac{|r|}{\lfloor Nt \rfloor}\right) 
T_{r} + \sum_{r=1}^{m_{N}-1}\frac{r}{N}T_{r}
\\ & +
\frac{m_{N}}{N} \sum_{r=m_{N}}^{\lfloor Nu \rfloor -m_{N}}T_{r}
+ \frac{\lfloor Nu \rfloor}{N}\sum_{r=\lfloor Nu \rfloor-m_{N}+1}^{\lfloor Nu \rfloor-1}\left( 1-\frac{r}{\lfloor Nu \rfloor}\right)T_{r},
\end{aligned}
\end{equation}
where $T_{r}=E_{I_{\operatorname{S}}} \widetilde{T}_{r} E_{I_{\operatorname{S}}}'$ with
\begin{equation} \label{eq:Ttilde}
\widetilde{T}_{r}=(\Gamma_{r+\ell_{2}} \otimes \Gamma_{r-\ell_{1}})+K_{p} (\Gamma_{r} \otimes \Gamma_{r+\ell_{2}-\ell_{1}}) + \sum_{i\in \ZZ} 
(\VPsi_{i+\ell_{1}} \otimes \VPsi_{i}) \Sigma
(\VPsi_{i+r} \otimes \VPsi_{i+r+\ell_{2}} )'
\end{equation}
and $\Sigma$ defined in \eqref{equation_big_Sigma}.
In particular, $T_{r}$ is absolutely summable, since
\begin{equation*}
\sum_{r\in \ZZ} \sum_{i\in \ZZ}	
|E_{I_{\operatorname{S}}} (\VPsi_{i+\ell_{1}} \otimes \VPsi_{i}) \Sigma
(\VPsi_{i+r} \otimes \VPsi_{i+r+\ell_{2}} )' E_{I_{\operatorname{S}}}'|<\infty
\end{equation*}
and
\begin{equation*}
\sum_{r\in \ZZ} E_{I_{\operatorname{S}}}(\Gamma_{r} \otimes \Gamma_{r})E_{I_{\operatorname{S}}}'<\infty
\end{equation*}
componentwise. The latter inequalities hold, since $d_{k_{1}}+d_{k_{2}}+d_{k_{3}}+d_{k_{4}}<1$ by construction.
Applying the dominated convergence theorem yields
\begin{equation*}
\begin{aligned}
\lim_{N \to \infty} N\Cov(\widehat{\Gamma}^{\operatorname{S}}_{N,\ell_{1}}(t),
\widehat{\Gamma}^{\operatorname{S}}_{N,\ell_{2}}(u))	
& =
\lim_{N \to \infty} \frac{\lfloor Nt \rfloor}{N}
\sum_{|r|<\lfloor Nt \rfloor}T_{r} \\
& =
t E_{I_{\operatorname{S}}} \Bigg(  \sum_{r \in \ZZ} \Big( 
\Gamma_{r+\ell_{2}} \otimes \Gamma_{r-\ell_{1}}+
K_{p} (	\Gamma_{r} \otimes \Gamma_{r+\ell_{2}-\ell_{1}}) \Big) 
\\ & \quad ~ \phantom{t E_{I_{\operatorname{S}}} \Bigg( )}
+  \sum_{r\in \ZZ} \sum_{i\in \ZZ} 
(\VPsi_{i+\ell_{1}} \otimes \VPsi_{i}) \Sigma
(\VPsi_{i+r} \otimes \VPsi_{i+r+\ell_{2}} )'	 \Bigg) E_{I_{\operatorname{S}}}' .
\end{aligned}
\end{equation*}
\end{proof}

The following lemma deals with the normalization for the long-range dependent components and gives the covariance structure of $Z^{\operatorname{L}}(t)$.
\begin{lemma} \label{Lemma_non_Gaussian_cov}  
Let $\widehat{\Gamma}_{N,\ell}^{\operatorname{L}}(t)$ be defined by \eqref{eq:sampleSL}
and $\Delta_{N} = E_{I_{\operatorname{L}}}( N^{D-\frac{1}{2}I_{p}} \otimes N^{D-\frac{1}{2}I_{p}} )E_{I_{\operatorname{L}}}'$. Then, for $\ell_{1}, \ell_{2} \geq 0$,
\begin{equation*}
\begin{aligned}
&\lim_{N \to \infty} \Cov(\Delta_{N}^{-1} \widehat{\Gamma}^{\operatorname{L}}_{N,\ell_{1}}(t),
\Delta_{N}^{-1} \widehat{\Gamma}^{\operatorname{L}}_{N,\ell_{2}}(u))	
\\ & =
E_{I_{\operatorname{L}}} (t^{\widetilde{D}}   I(R)  t^{\widetilde{D}}
+
u^{\widetilde{D}}   I(R)'  u^{\widetilde{D}}
-|t-u|^{\widetilde{D}}   I(R,t-u)  |t-u|^{\widetilde{D}})E_{I_{\operatorname{L}}}'
\end{aligned}
\end{equation*}	
with $\widetilde{D}=	(D_{p_{1}} \oplus D_{p_{1}})-\frac{1}{2}I_{p_{1}^2}$ 
and $I(R)=\int_{0}^{1} (1-x)x^{\widetilde{D}}(R \otimes R) x^{\widetilde{D}}dx $, where $\oplus$ denotes the Kronecker sum defined as $D_{p_{1}} \oplus D_{p_{1}}=(I_{p_{1}} \otimes D_{p_{1}}) + (D_{p_{1}} \otimes I_{p_{1}})$ and $I(R,t)= I(R) \mathds{1}_{\{t>0\}}+ I(R)' \mathds{1}_{\{t<0\}}$.
\end{lemma}

\begin{proof}
Note that one gets for $\Cov(\Delta_{N}^{-1} \widehat{\Gamma}^{\operatorname{L}}_{N,\ell_{1}}(t),\Delta_{N}^{-1} \widehat{\Gamma}^{\operatorname{L}}_{N,\ell_{2}}(u))$ with $t<u$ the same expression
as in \eqref{eq:shortCovTr} by replacing the subscript ``S'' by ``L'' and adjusting the normalization sequence such that $T_{r}= \frac{1}{N} \Delta_{N}^{-1}E_{I_{\operatorname{L}}} \widetilde{T}_{r} E_{I_{\operatorname{L}}}' \Delta_{N}^{-1}$ with $\widetilde{T}_{r}$ as in \eqref{eq:Ttilde}.
We consider the summands separately. By Proposition \ref{Proposition_mLRD}, the autocovariances of the underlying process $\{X^{\operatorname{L}}_{n}\}_{n \in \ZZ}$ satisfy \eqref{equality_Definition_KP_multivariate_stationary_lrd} with \eqref{equation_Rij} and we get for the first summand of $T_{r}$  		
\begin{align*}
&
\lim_{N \to \infty} \frac{1}{N^2} \sum_{r=0}^{\lfloor Nt \rfloor}\left(\lfloor Nt \rfloor- r \right)
\Delta_{N}^{-1}E_{I_{\operatorname{L}}} (\Gamma_{r+\ell_{2}} \otimes \Gamma_{r-\ell_{1}})E_{I_{\operatorname{L}}}'\Delta_{N}^{-1}	
\\& =
\lim_{N \to \infty} \frac{1}{N^2} \sum_{r=0}^{\lfloor Nt \rfloor}\left(\lfloor Nt \rfloor- r \right)
\Delta_{N}^{-1}
( (r+\ell_{2})^{D_{p_{1}}-\frac{1}{2}I_{p_{1}}} R(r+\ell_{2}) (r+\ell_{2})^{D_{p_{1}}-\frac{1}{2}I_{p_{1}}} \\
& \quad \otimes 
(r-\ell_{1})^{D_{p_{1}}-\frac{1}{2}I_{p_{1}}} R(r-\ell_{1}) (r-\ell_{1})^{D_{p_{1}}-\frac{1}{2}I_{p_{1}}} ) 
\Delta_{N}^{-1}
\\& =
\int_{0}^{t} (t-x) (x^{D_{p_{1}}-\frac{1}{2}I_{p_{1}}} \otimes x^{D_{p_{1}}-\frac{1}{2}I_{p_{1}}})(R \otimes R) (x^{D_{p_{1}}-\frac{1}{2}I_{p_{1}}} \otimes x^{D_{p_{1}}-\frac{1}{2}I_{p_{1}}})dx
\\& =
t^{(D_{p_{1}} \oplus D_{p_{1}})-\frac{1}{2}I_{p_{1}}}
\int_{0}^{1} (1-x)x^{(D_{p_{1}} \oplus D_{p_{1}})-I_{p_{1}}}(R \otimes R) x^{(D_{p_{1}} \oplus D_{p_{1}})-I_{p_{1}}}dx ~
t^{(D_{p_{1}} \oplus D_{p_{1}})-\frac{1}{2}I_{p_{1}}}.
\end{align*}
The second term of $T_{r}$ can be dealt with analogously.
We consider the last summand componentwise for indices taking values in $I_{\operatorname{L}}$. Define
\begin{equation*}
\VPsi^{i}(k;l)
:=		
\VPsi^{i}(k_{1},\dots,k_{4};l_{1},\dots,l_{4})
:=			
\psi_{k_{1}l_{1},i}
\psi_{k_{2}l_{2},i+\ell_{1}} 
\psi_{k_{3}l_{3},i+r+\ell_{2}} 
\psi_{k_{4}l_{4},i+r} 
\end{equation*}
as part of the component, and consider
\begin{equation*}
\sum_{i\in \ZZ}\VPsi^{i}(k;l)
=		
\sum_{i=-\infty}^{-r-1}\VPsi^{i}(k;l) + 
\sum_{i=0}^{\infty}\VPsi^{i}(k;l) + 
\sum_{i=-r}^{0}\VPsi^{i}(k;l).
\end{equation*}	
For example, for the last term, note that
\begin{eqnarray*}
&&	
\sum_{i=-r}^{0} \VPsi^{i}(p;q) 	
\\ && = 		
\sum_{i=-r}^{0}
C_{p_{1}q_{1}}(i)C_{p_{2}q_{2}}(i+k_{1}) 
C_{p_{3}q_{3}}(i+r+k_{2})C_{p_{4}q_{4}}(i+r)	
\\ &&	\phantom{\sum_{i=-r}^{0} C_{ll_{1}}(i)C_{ml_{2}}(i+h)}
\times	
|i|^{d_{p_{1}}-1} |i+k_{1}|^{d_{p_{2}}-1}  
|i+r+k_{2}|^{d_{p_{3}}-1}	|i+r|^{d_{p_{4}}-1}
\\ && 
\sim 	\sum_{i=0}^{r}	
C_{p_{1}q_{1}}(-i)C_{p_{2}q_{2}}(k_{1}-i) 
C_{p_{3}q_{3}}(r+k_{2}-i)C_{p_{4}q_{4}}(r-i)
i^{d_{p_{1}}+d_{p_{2}}-2}  (r-i)^{d_{p_{3}}+d_{p_{4}}-2}	
\\ && =		
r^{d_{p_{1}}+d_{p_{2}}+d_{p_{3}}+d_{p_{4}}-3} 
\sum_{i=0}^{r}	
C_{p_{1}q_{1}}(-i)C_{p_{2}q_{2}}(k_{1}-i) 
C_{p_{3}q_{3}}(r+k_{2}-i)C_{p_{4}q_{4}}(r-i)	
\\ &&	\phantom{r^{1-d_{l}-2d_{m}-d_{\widetilde{l}}} 
\sum_{i=0}^{r}	C_{ll_{1}}(-i)L_{ml_{2}}(h-i)}
\times	
\left( \frac{i}{r}\right) ^{d_{p_{1}}+d_{p_{2}}-2}
\left(\frac{r-i}{r}\right) ^{d_{p_{3}}+d_{p_{4}}-2} \frac{1}{r}	
\\ && \sim	
r^{d_{p_{1}}+d_{p_{2}}+d_{p_{3}}+d_{p_{4}}-3} 
\alpha_{p_{1}q_{1}}^{-}\alpha_{p_{2}q_{2}}^{-}
\alpha_{p_{3}q_{3}}^{+}\alpha_{p_{4}q_{4}}^{+}
\int_{0}^{1}	x^{d_{p_{1}}+d_{p_{2}}-2} (1-x)^{d_{p_{3}}+d_{p_{4}}-2} dx,
\end{eqnarray*}		
as $r \to \infty$. The first and second terms yield similarly
\begin{equation*}
\begin{aligned}
\sum_{i=-\infty}^{-r-1} \VPsi^{i}(k;l)
&	\sim		
r^{d_{k_{1}}+d_{k_{2}}+d_{k_{3}}+d_{k_{4}}-3} 
\alpha_{k_{1}l_{1}}^{-}\alpha_{k_{3}l_{2}}^{-}
\alpha_{k_{3}l_{3}}^{-}\alpha_{k_{4}l_{4}}^{-}
\int_{1}^{\infty}	x^{d_{k_{1}}+d_{k_{2}}-2} (x-1)^{d_{k_{3}}+d_{k_{4}}-2} dx\\
\sum_{i=0}^{\infty}\VPsi^{i}(k;l)
& \sim		
r^{d_{k_{1}}+d_{k_{2}}+d_{k_{3}}+d_{k_{4}}-3} 
\alpha_{k_{1}l_{1}}^{+}\alpha_{k_{3}l_{2}}^{+}
\alpha_{k_{3}l_{3}}^{+}\alpha_{k_{4}l_{4}}^{+}
\int_{0}^{\infty}	x^{d_{k_{1}}+d_{k_{2}}-2} (x+1)^{d_{k_{3}}+d_{k_{4}}-2}  dx.
\end{aligned}
\end{equation*}			
Then,
\begin{equation*}
\sum_{i\in \ZZ}\VPsi^{i}(k;l) 
\sim	
r^{d_{k_{1}}+d_{k_{2}}+d_{k_{3}}+d_{k_{4}}-3}	
C(d_{k_{1}},d_{k_{2}},d_{k_{3}},d_{k_{4}})
\end{equation*}
and
\begin{equation*}
\begin{aligned}
&
\lim_{N \to \infty} \frac{1}{N^2} \sum_{|r|<N}\left(N-|r| \right)
\sum_{i\in \ZZ} N^{2-d_{k_{1}}-d_{k_{2}}-d_{k_{3}}-d_{k_{4}}} \\
&
\hspace{2cm}
\times
\psi_{k_{1}l_{1},i}
\psi_{k_{2}l_{2},i+\ell_{1}} 
\psi_{k_{3}l_{3},i+r+\ell_{2}} 
\psi_{k_{4}l_{4},i+r} 
\Sigma_{k_{1}k_{2}k_{3}k_{4}}
=0,
\end{aligned}
\end{equation*}
where $\Sigma_{k_{1}k_{2}k_{3}k_{4}}$ denotes a component of $\Sigma$ in \eqref{equation_big_Sigma}.	
\end{proof}

The next lemma gives the covariance between the long- and the short-range dependent components of the sample autocovariances.

\begin{lemma} \label{le:mixedcov}
Let $\widehat{\Gamma}_{N,\ell}^{\operatorname{L}}(t)$ and $\widehat{\Gamma}_{N,\ell}^{\operatorname{S}}(t)$ be defined by \eqref{eq:sampleSL} and $\Delta_{N} = E_{I_{\operatorname{L}}}( N^{D-\frac{1}{2}I_{p}} \otimes N^{D-\frac{1}{2}I_{p}} )E_{I_{\operatorname{L}}}'$. Then, for $\ell_{1},\ell_{2} \geq 0$, 
\begin{equation*}
\lim_{N \to \infty} \Cov(\Delta_{N}^{-1} \widehat{\Gamma}^{\operatorname{L}}_{N,\ell_{1}}(t),
N^{\frac{1}{2}} \widehat{\Gamma}^{\operatorname{S}}_{N,\ell_{2}}(u))	=0
\end{equation*}		
\end{lemma}

\begin{proof}
We follow the proof of Lemma \ref{Lemma_Gaussian_convergence_covariance}.  
Note that one gets for $\Cov(\Delta_{N}^{-1} \widehat{\Gamma}^{\operatorname{L}}_{N,\ell_{1}}(t),
N^{\frac{1}{2}} \widehat{\Gamma}^{\operatorname{S}}_{N,\ell_{2}}(u))$ with $t<u$ the same expression
as in \eqref{eq:shortCovTr} by setting 
$T_{r}= N^{-\frac{1}{2}} \Delta_{N}^{-1}E_{I_{\operatorname{L}}} \widetilde{T}_{r} E_{I_{\operatorname{S}}}' $ with $\widetilde{T}_{r}$ as in \eqref{eq:Ttilde}.
Then, since $ \sum_{i=1}^{4} d_{k_{i}}<1$ for $k_{1},k_{2} \in I_{\operatorname{L}}$ and $k_{3},k_{4} \in I_{\operatorname{S}}$,
\begin{equation*}
\sum_{r\in \ZZ} \sum_{i\in \ZZ}	
|E_{I_{\operatorname{L}}} (\VPsi_{i+\ell_{1}} \otimes \VPsi_{i}) \Sigma
(\VPsi_{i+r} \otimes \VPsi_{i+r+\ell_{2}} )' E_{I_{\operatorname{S}}}'|<\infty
\end{equation*}
and
\begin{equation*}
\sum_{r\in \ZZ} E_{I_{\operatorname{L}}}(\Gamma_{r} \otimes \Gamma_{r})E_{I_{\operatorname{S}}}'<\infty
\end{equation*}
componentwise. This implies that $T_{r}$ is absolutely summable over $r$ and
\begin{equation*}
\begin{aligned}
\lim_{N \to \infty} \Cov(\Delta_{N}^{-1} \widehat{\Gamma}^{\operatorname{L}}_{N,\ell_{1}}(t),
N^{\frac{1}{2}} \widehat{\Gamma}^{\operatorname{S}}_{N,\ell_{2}}(u))	
& =	\lim_{N \to \infty} \frac{\Delta_{N}^{-1}}{N^{\frac{1}{2}}} \frac{\lfloor Nt \rfloor}{N} \sum_{|r|<\lfloor Nt \rfloor}\left( 1-\frac{|r|}{\lfloor Nt \rfloor} \right) E_{I_{\operatorname{L}}} \widetilde{T}_{r} E_{I_{\operatorname{S}}}'
= 0.
\end{aligned}
\end{equation*}
\end{proof}

We conclude the section with a comment on the properness of the limiting process
$((Z^{\operatorname{L}}(t))' , (G_{\ell}^{\operatorname{S}}(t))')'$ defined in \eqref{equation_ZL_Z_reduced_to_L} and \eqref{equality_covariance_Gaussian_Ishort}, since it is in parts a consequence of the previous Lemmas \ref{Lemma_Gaussian_convergence_covariance} and \ref{le:mixedcov}.
Since $Z^{\operatorname{L}}(t)$ and $G_{\ell}^{\operatorname{S}}(t)$ are uncorrelated by Lemma \ref{le:mixedcov}, it is enough to prove properness for each of those processes separately.
The matrix $\Lambda:=\sum_{j \in \ZZ} (\psi_{kl,j})_{k=p_{1}+1,\dots,p;l=1,\dots,p}$ is assumed to have full rank by condition (S), so 
$\Lambda \Lambda'$ is positive definite.
The matrix $\Lambda \Lambda'$ is also equal to $\sum_{n \in \ZZ} \Gamma_{p_{2},n}$. For this reason,
$\E ( G^{\operatorname{S}}(t) (G^{\operatorname{S}}(t))' )$ is positive definite, which follows by Lemma \ref{Lemma_Gaussian_convergence_covariance}. 
We refrain from using Lemma \ref{Lemma_non_Gaussian_cov} to infer that $Z^{\operatorname{L}}(t)$ is proper. Instead, we calculate
the covariances of $Z^{\operatorname{L}}(t)$ directly
\begin{equation*}
\begin{aligned}
\E  Z^{\operatorname{L}}(t) (Z^{\operatorname{L}}(t))' 
& =
E_{I_{\operatorname{L}}} \E ( I_{2}(f_{t,D})I'_{2}(f_{t,D}) ) E_{I_{\operatorname{L}}}' 
\\ &= 
E_{I_{\operatorname{L}}} \sum_{s_{1},s_{2} \in \{+,-\}} \sum_{r_{1},r_{2} \in \{+,-\}}
\int_{\RR^2}'  \int_{0}^{t} \int_{0}^{t} ((v_{1}-x_{2})^{D-I_{p}}_{s_{2}} \otimes (v_{1}-x_{1})^{D-I_{p}}_{s_{1}}) 
\\ & ~~~~ \times
\Big((M^{s_{2}}(M^{r_{2}})' \otimes  M^{s_{1}}(M^{r_{1}})')
+
(M^{s_{2}}(M^{r_{1}})' \otimes  M^{s_{1}}(M^{r_{2}})') \Big)
\\ & ~~~~ \times
((v_{2}-x_{2})^{D-I_{p}}_{r_{2}} \otimes (v_{2}-x_{1})^{D-I_{p}}_{r_{1}})
dv_{1}dv_{2} dx_{1} dx_{2} E_{I_{\operatorname{L}}}',
\end{aligned}
\end{equation*}
where we used Theorem 2.2 and 7.9 in \cite{MagnusNeudecker}.
Note that $M^{+}=((A^{+})' ~~ 0_{p \times p_{2}})'$ and $M^{-}=((A^{-})' ~~ 0_{p \times p_{2}})'$. Since the elimination matrix $E_{I_{\operatorname{L}}}$ is applied from both sides, the zero rows and columns are eliminated (see Remark \ref{Remark3}). For this reason, the function is positive definite, since $A^{+}(A^{+})'$ and $A^{-}(A^{-})'$ are positive definite as a consequence of condition (L).

\subsubsection{Convergence of the finite-dimensional distributions} \label{s:fdd_SampleA}
The proof of the convergence of the finite-dimensional distributions is structured as follows. First, we focus on the pure long-range dependence part with $k,l \in I_{\operatorname{L}}$. Note that the sample autocovariances can be separated into the diagonal and off-diagonal parts
	\begin{equation} \label{eq:decomp_off_diagonal}
	\begin{aligned}
			(\widehat{\Gamma}_{N,\ell}-\Gamma_{\ell})(t)
	&=		\frac{1}{N} \sum_{n=1}^{\lfloor Nt \rfloor} \sum_{j+\ell \neq i} 
			\VPsi_{j} \varepsilon_{n-j} \varepsilon_{n+k-i}' \VPsi_{i} '
		+	\frac{1}{N} \sum_{n=1}^{\lfloor Nt \rfloor}\sum_{j\in \ZZ} 
			\VPsi_{j} (\varepsilon_{n-j} \varepsilon_{n-j}'-I) \VPsi_{j+\ell}'		\\
	&=:		O_{N,\ell}(t)+D_{N,\ell}(t).
	\end{aligned}
	\end{equation}
The following Lemma \ref{Lemma_convergence_diagonal} will imply that the sought convergence can be proved only for the off-diagonal part of the long-range dependent components.
We will then provide a convergence result, Lemma \ref{Lemma_off_diagonals}, for multivariate second-order linear forms with respect to the components of a multivariate i.i.d.\ process. Finally, a series of lemmas, Lemma \ref{Lemma_truncated_iid} - \ref{Lemma_non_truncated}, follow the idea of \citet[p.\ 2480]{BaiTaqqu2013} to prove the joint convergence of $Y_{N,\ell}^{\operatorname{L}}(t)$ and $Y_{N,\ell}^{\operatorname{S}}(t)$.

\begin{lemma}	\label{Lemma_convergence_diagonal}
Let $\{X_{n}\}_{n \in \ZZ}$ be as in Theorem \ref{Theorem_Mixture_SRD_LRD}. Then
	\begin{equation*}
	N^{\frac{1}{2}}(\vecop_{I_{\operatorname{L}}}(D_{N,\ell}(t)), \ell=0,\dots,L) 
	\overset{\mathcal{L}}{\longrightarrow} 
	(C_{k}(G(t)), \ell=0,\dots,L), \hspace{0.2cm} t \in [0,1],
	\end{equation*}
in $D[0,1]^{|I_{\operatorname{L}}|}$, where $\{G(t)\}_{t \in [0,1]}$ is an $\RR^{p \times p}$-valued Brownian motion and $C_{\ell}:\RR^{p \times p} \to \RR^{|I_{\operatorname{L}}|}$ with 
$C_{\ell}(X)=\vecop_{I_{\operatorname{L}}}\left( \sum_{j \in \ZZ}  \VPsi_{j} X \VPsi_{j+\ell}' \right) $, 
$X \in \RR^{p \times p}$.
\end{lemma}
\begin{proof}
Consider the linear combination $\sum_{\ell=0}^{L}\mu_{\ell}(\vecop_{I_{\operatorname{L}}}(D_{N,\ell}(t))$
of the diagonal term for $\mu_{\ell} \in \RR, \ell \in \{0,\dots,L \}$. It is short-range dependent in the sense that
\begin{equation*}
\sum_{j \in \ZZ} \| a_{j} \|_{F} < \infty,
\end{equation*}
where $a_{j}:\RR^{p \times p} \to \RR^{p \times p}$ with 
$a_{j}(\cdot)= \sum_{\ell=0}^{L}\mu_{\ell} \vecop_{I_{\operatorname{L}}}(\VPsi_{j} (\cdot) \VPsi_{j+\ell}') $.
Using the Cram\'{e}r-Wold theorem and the results in \citet[Theorem 5]{peligrad}, we get
\begin{equation*}
\sum_{\ell=0}^{L}\mu_{\ell}(\vecop_{I_{\operatorname{L}}}(D_{N,\ell}(t))
\overset{\mathcal{L}}{\longrightarrow} 
AG(t), \hspace{0.2cm} t \in [0,1],
\end{equation*}
for all $\mu_{\ell} \in \RR, \ell \in \{0,\dots,L \}$, where $A=\sum_{j\in \ZZ} a_{j}$.
\end{proof}

To investigate the asymptotic behavior of the off-diagonal terms for the long-range dependent components denoted by $O_{N,\ell}^{\operatorname{L}}(t)=\vecop_{I_{\operatorname{L}}}(O_{N,\ell}(t))$, we prove
\begin{equation} \label{eq: conv}
\Delta_{N}^{-1} O_{N,\ell}^{\operatorname{L}}(t)
\overset{f.d.d.}{\longrightarrow}
Z^{\operatorname{L}}(t).
\end{equation}
The process $Z^{\operatorname{L}}(t)$ is defined in \eqref{equation_ZL_Z_reduced_to_L}.
Rewriting the left-hand side of \eqref{eq: conv} yields
\begin{align*}
\Delta_{N}^{-1} O_{N,\ell}^{\operatorname{L}}(t) 
&= 	
\Delta_{N}^{-1}
\vecop_{I_{\operatorname{L}}}
\Big(\frac{1}{N} \sum_{n=1}^{\lfloor Nt \rfloor} \sum_{j+\ell \neq i} 
\VPsi_{j} \varepsilon_{n-j} \varepsilon_{n+\ell-i}' \VPsi_{i} ' \Big)	\\
&= 	
\Delta_{N}^{-1} E_{I_{\operatorname{L}}}
\vecop
\Big(\frac{1}{N} \sum_{n=1}^{\lfloor Nt \rfloor} \sum_{i_{1} \neq i_{2}} 
\VPsi_{n-i_{1}} \varepsilon_{i_{1}} \varepsilon_{i_{2}}' \VPsi_{n+\ell-i_{2}} ' \Big)	\\
&= 	 
\sum_{i_{1} \neq i_{2}} 
\Delta_{N}^{-1} E_{I_{\operatorname{L}}}
\frac{1}{N} \sum_{n=1}^{\lfloor Nt \rfloor}
(\VPsi_{n+\ell-i_{2}} \otimes \VPsi_{n-i_{1}}) \vecop(\varepsilon_{i_{1}} \varepsilon_{i_{2}}') 	\\
&=		
\sum_{i_{1} \neq i_{2}} C_{N}(i_{1},i_{2}) 
\vecop(\varepsilon_{i_{1}} \varepsilon_{i_{2}}')	,
\end{align*}
where
\begin{equation}	\label{equality_Cn(q,...)}
C_{N}(i_{1},i_{2})
=		
\Delta_{N}^{-1} E_{I_{\operatorname{L}}}
\frac{1}{N} \sum_{n=1}^{\lfloor Nt \rfloor}
(\VPsi_{n+\ell-i_{2}} \otimes \VPsi_{n-i_{1}}).
\end{equation}
The following lemma provides a generalization of Proposition 14.3.2 in \cite{giraitis}.
It uses the space of simple functions $S_{M}(\RR^2, \RR^{p^2 \times p^2})$ defined as follows. Partition the space $\RR^2$ into cubes of size $\frac{1}{M}$ with $M\in \NN$.
Let $(\Delta):=\Delta_{1} \times \Delta_{2} \subset \RR^2$ be such that 
$\Delta_{1}, \Delta_{2} \in I_{M}$, where 
$I_{M}:= \{ (\frac{j}{M}, \frac{j+1}{M} ], j \in \ZZ \}, M\in \NN$. We write 
$(\Delta) \in \{\Delta_{M} \}$ and 
$(\Delta) \in \{\Delta_{M}^{\operatorname{diag}} \}$, if $\Delta_{1}=\Delta_{2}$. 
This space $S_{M}(\RR^2, \RR^{p^2 \times p^2})$ then consists of $\RR^{p^2 \times p^2}$-valued functions $f = f(x_{1},x_{2})$ on $\RR^2$ satisfying 
\begin{equation*}
f(x_{1},x_{2})=
\begin{cases}
f^{\Delta_{1} \Delta_{2}},			
\hspace{0.2cm}&	x \in (\Delta), (\Delta) \in \{\Delta_{M} \}, \\
0,											
\hspace{0.2cm}&	x \in (\Delta), (\Delta) \in \{\Delta_{M}^{\operatorname{diag}} \} ,
\end{cases}
\end{equation*}
where $f^{\Delta_{1} \Delta_{2}} \in \RR^{p \times p}$. Set $\Vert f \Vert^2= \int_{\RR^{2}} \| f(x_{1},x_{2}) \|^2_{F} dx_{1}dx_{2}$ for $f:\RR^2 \to \RR^{p^2 \times p^2}$.
\begin{lemma} \label{Lemma_off_diagonals}
Consider the off-diagonal tuple
\begin{equation} \label{eq:Q2}
Q_{2}(C_{N})=\sum_{i_{1}\neq i_{2}} 
C_{N}(i_{1},i_{2}) \vecop(\varepsilon_{i_{1}} \varepsilon_{i_{2}}') .
\end{equation}
Assume that the weights $C_{N}$ are such that the functions
\begin{equation*}
\widetilde{C}_{N}(x_{1},x_{2})= N C_{N}([x_{1}N],[x_{2}N]), 
\hspace{0.2cm}x_{1}, x_{2} \in \RR
\end{equation*}
satisfy
\begin{equation*}
||\widetilde{C}_{N}-f|| \to 0
\end{equation*}
for a function
$f \in L^{2}(\RR^{2}, \RR^{p^2 \times p^2})$. Then $Q_{2}(C_{N}) \overset{f.d.d.}{\longrightarrow} I_{2}(f)$.
\end{lemma}

\begin{proof}
Let $f_{\varepsilon}$ be in $S_{M}(\RR^2, \RR^{p \times p})$ and define
\begin{equation*}
C_{N,\varepsilon}(i_{1},i_{2}) 
:=		
N^{-1} f_{\varepsilon}\left( \frac{i_{1}}{N},\frac{i_{2}}{N} \right) , 
\hspace*{0.2cm}
i_{1},i_{2} \in \ZZ.
\end{equation*}
It is enough to prove that for all $\varepsilon>0$, there exists $f_{\varepsilon} \in S_{M}(\RR^{2}, \RR^{p^2 \times p^2})$, $M \geq 1$, 
such that
\begin{align}
\Var \|Q_{2}(C_{N})-Q_{2}(C_{N,\varepsilon}) \|_{F} \leq \varepsilon,	\label{Con1}		\\
\Var \| I_{2}(f_{\varepsilon})-I_{2}(f) \|_{F} \leq \varepsilon, 	\label{Con2}	\\	
Q_{2}(C_{N,\varepsilon}) \overset{f.d.d.}{\longrightarrow} I_{2}(f_{\varepsilon}) ,	\label{Con3}
\end{align}
as $N \to \infty$. Note that
\begin{equation*}
\E(\vecop(\varepsilon_{i_{1}} \varepsilon_{i_{2}}')
(\vecop(\varepsilon_{j_{1}} \varepsilon_{j_{2}}'))')=
\begin{cases}
I_{p^2},		&\text{if } i_{1}=j_{1}, i_{2}=j_{2}, \\
K_{p},		&\text{if } i_{1}=j_{2}, i_{2}=j_{1}, \\
0,			&\text{otherwise. }
\end{cases}
\end{equation*}
Then, for \eqref{Con1}, 
\begin{equation*}
\begin{aligned}
\Var \|Q_{2}(C_{N})\|_{F}
&=			
\E \| \sum_{i_{1} \neq i_{2}} 
C_{N}(i_{1},i_{2}) 
\vecop(\varepsilon_{i_{1}} \varepsilon_{i_{2}}')	\|_{F}^2
\leq			
2 \sum_{i_{1} \neq i_{2}} \| C_{N}(i_{1},i_{2}) \|_{F}^2	\\
&=			
2 \int_{\RR^2}'  N^2  
\| C_{N}([x_{1}N],[x_{2}N]) \|_{F}^2 dx_{1} dx_{2}
=		
2 \| \widetilde{C}_{N} \|^2 .
\end{aligned}
\end{equation*}
This implies
\begin{equation*}
\Var \|Q_{2}(C_{N})-Q_{2}(C_{N,\varepsilon})\|_{F}
\leq			
2 \| \widetilde{C}_{N} - \widetilde{C}_{N,\varepsilon}\|^2.
\end{equation*}
The latter bound could be approximated by finding simple functions $f_{\varepsilon}$ such that
\begin{equation*}
\| \widetilde{C}_{N} - \widetilde{C}_{N,\varepsilon}\|^2 < \varepsilon
\end{equation*}
as $N \to \infty$. By assumption, there is $N_{0} \geq 1$ such that
\begin{equation*}
\| \widetilde{C}_{N} -\widetilde{C}_{N_{0}}\|^2	
\leq			
2\| \widetilde{C}_{N} -f \|^2+
2\| f-\widetilde{C}_{N_{0}}\|^2
\leq		
\frac{\varepsilon}{6}
\end{equation*}
for all $N \geq N_{0}$.
Given $N_{0}\geq 1$ and $\varepsilon>0$, there exist simple functions 
$f_{\varepsilon}$ such that
\begin{equation*}
\| \widetilde{C}_{N_{0}} -f_{\varepsilon} \|^2 \leq \varepsilon/6.
\end{equation*}
The function $\widetilde{C}_{N,\varepsilon}$ derived from $C_{N,\varepsilon}$ satisfies
\begin{equation*}
\|f_{\varepsilon}-\widetilde{C}_{N,\varepsilon}\|^2
=		
\int_{\RR^2} \Big\| f_{\varepsilon}(x_{1},x_{2})-
f_{\varepsilon}\Big(\frac{\lfloor x_{1}N \rfloor }{N} , 
\frac{\lfloor x_{2}N \rfloor }{N}\Big)\Big\|_{F}^2 dx_{1}dx_{2}
\to  0
\end{equation*}
as $N \to \infty$.
Hence, there is $\widetilde{N}_{0} \geq 1$ such that
\begin{equation*}
\| \widetilde{C}_{N} -\widetilde{C}_{N,\varepsilon} \|^2
\leq	
3\| \widetilde{C}_{N} -\widetilde{C}_{N_{0},\varepsilon}\|^2+
3\| \widetilde{C}_{N_{0}} -\widetilde{C}_{\varepsilon}\|^2+
3\| \widetilde{C}_{\varepsilon} -\widetilde{C}_{N,\varepsilon}\|^2
\leq		
\varepsilon /2.
\end{equation*}
This proves \eqref{Con2} since
\begin{equation*}
\Var \| I(f_{\varepsilon})-I(f) \|	
\leq 	
2 \|f_{\varepsilon}-f \|^2	
\leq	
2 (2\|f_{\varepsilon}-\widetilde{C}_{N_{0}}\|^2 + 2\|\widetilde{C}_{N_{0}}-f \|^2).
\end{equation*}
Finally, for \eqref{Con3}, note that
\begin{equation*}
\begin{aligned}
Q_{2}(C_{\varepsilon,N})
&=				
\sum_{i_{1} \neq  i_{2}} 
C_{N,\varepsilon}(i_{1},i_{2}) 
\vecop(\varepsilon_{i_{1}} \varepsilon_{i_{2}}')
=				
\sum_{i_{1} \neq i_{2}} N^{-1} 
f_{\varepsilon} \Big(\frac{i_{1}}{N},\frac{i_{2}}{N} \Big) 
\vecop(\varepsilon_{i_{1}} \varepsilon_{i_{2}}')		\\
&=				
\sum_{(\Delta) \in \{\Delta_{M}\}} 
f_{\varepsilon}^{\Delta_{1} \Delta_{2}} N^{-1} 
\sum_{i_{1} \neq i_{2}}
\vecop(\varepsilon_{i_{1}} \varepsilon_{i_{2}}')
\mathds{1} _{ \{ \frac{i_{1}}{N} \in \Delta_{1}, 
\frac{i_{2}}{N} \in \Delta_{2} \}} \\
&=				
\sum_{(\Delta) \in \{\Delta_{M}\}} 
f_{\varepsilon}^{\Delta_{1} \Delta_{2}} 
\vecop(W_{N} (\Delta_{1}) W'_{N} (\Delta_{2})),
\end{aligned}
\end{equation*}		
where
\begin{equation} \label{equality_partial_sum}
W_{N}(\Delta_{i})
=	
N^{-\frac{1}{2}} \sum_{j : \frac{j}{N}\in \Delta_{i}} \varepsilon_{j}.
\end{equation}
Now, define the vector 
$W(\Delta_{i}):=(W_{1,N}(\Delta_{i}),\dots,W_{p,N}(\Delta_{i}))$.
Since the intervals $\Delta_{i}$ are disjoint, $(W(\Delta_{i}))_{i \in \ZZ}$ are 
independent random vectors. Since $\{\varepsilon_{j}\}_{ j \in \ZZ}$ are i.i.d.\,
the central limit theorem applies and hence
\begin{equation*}
(W_{N}(\Delta_{-J}),\dots,W_{N}(\Delta_{J}))
\overset{f.d.d.}{\longrightarrow}
(W(\Delta_{-J}),\dots,W(\Delta_{J})).
\end{equation*}			 		
Using the  continuous mapping theorem yields
\begin{equation*}
Q_{2}(C_{N,\varepsilon}) 
\overset{f.d.d.}{\longrightarrow} 
\sum_{(\Delta) \in \{\Delta_{M}\}} 
f_{\varepsilon}^{\Delta_{1} \Delta_{2}} 
\vecop(W (\Delta_{1}) W' (\Delta_{2}))
= I_{2}(f_{\varepsilon}).
\end{equation*}
\end{proof}

As noted above, we prove Theorem \ref{Theorem_Mixture_SRD_LRD} through a number of lemmas following the idea of \citet[p.\ 2480]{BaiTaqqu2013}.
We introduce the $\kappa$-truncated versions of the quantities of interest
\begin{equation*}
X_{n}^{(\kappa)} = 
\sum_{j=-\kappa}^{\kappa} \VPsi_{j}\varepsilon_{n-j}, 
\hspace*{2cm} 	
\widehat{\Gamma}_{N,\ell}^{\operatorname{S},(\kappa)}
=	
\vecop_{I_{\operatorname{S}}}\Big(\frac{1}{N} \sum_{n=1}^{N} X_{n}^{(\kappa)} X_{n+\ell}^{(\kappa)'}\Big)
\end{equation*}
and
\begin{equation*}
\Gamma_{\ell}^{\operatorname{S},(\kappa)}
=		
\vecop_{I_{\operatorname{S}}}(\E(X_{0}^{(\kappa)}X_{\ell}^{(\kappa)'}))
=		
\vecop_{I_{\operatorname{S}}}\Big(\sum_{j=-\kappa-\ell}^{\kappa-\ell} \VPsi_{j} \VPsi_{j+\ell}'\Big).
\end{equation*}
The $\kappa$-truncated version of 
$Y_{N,\ell}(t)$ is written as
\begin{equation*}
Y_{N,\ell}^{(\kappa)}(t)
=	
\frac{1}{N}\sum_{n=1}^{\lfloor Nt \rfloor} 
\left( X_{n}^{(\kappa)} X_{n+\ell}^{(\kappa)'}-\E(X_{0}^{(\kappa)} X_{\ell}^{(\kappa)'}) \right).
\end{equation*}
The truncated version of the limit process $\{G^{\operatorname{S}}_{\ell}(t)\}_{t \in [0,1]}$ is defined by its covariance structure
\begin{equation} 	\label{equality_covariance_truncated}
\begin{aligned}
\Cov(G^{\operatorname{S},(\kappa)}_{\ell_{1}}(t),G^{\operatorname{S},(\kappa)}_{\ell_{2}}(u))
& =		
\min(t,u)E_{I_{\operatorname{S}}} \Bigg( 
\sum_{r \in \ZZ} \Big( \Gamma_{r+\ell_{2}}^{(\kappa)} \otimes \Gamma_{r-\ell_{1}}^{(\kappa)}+
K_{p} (\Gamma_{r}^{(\kappa)} \otimes \Gamma_{r+\ell_{2}-\ell_{1}}^{(\kappa)}	) 
\Big) 	
\\ & \hspace{1cm}+
\sum_{r \in \ZZ} \sum_{i=-\kappa}^{\kappa} 
(\VPsi_{i+\ell_{1}} \otimes \VPsi_{i}) \Sigma
(\VPsi_{i+r} \otimes \VPsi_{i+r+\ell_{2}} )'	 \Bigg) E_{I_{\operatorname{S}}}'.	
\end{aligned} 
\end{equation}

\begin{lemma} \label{Lemma_truncated_iid}
Suppose the assumptions of Theorem \ref{Theorem_Mixture_SRD_LRD} and set
\begin{equation*}
Y_{N,\ell}^{\operatorname{S},(\kappa)}(t)=\vecop_{I_{\operatorname{S}}}(Y_{N,\ell}^{(\kappa)}(t))
\hspace*{0.2cm} \text{ and } \hspace{0.2cm}
W_{N}(t)=N^{-\frac{1}{2}} \sum_{n=1}^{\lfloor Nt \rfloor} 
\varepsilon_{n}
\end{equation*}
with $\varepsilon_{0} \in \RR^{p}$. Then,
\begin{equation*}
\vecop(N^{\frac{1}{2}}Y_{N,\ell}^{\operatorname{S},(\kappa)}(t), W_{N}(t))
\overset{f.d.d.}{\longrightarrow}
\vecop(G^{\operatorname{S},(\kappa)}_{\ell}(t),W(t)),
\end{equation*}
where $G^{\operatorname{S},(\kappa)}_{\ell}(t)$ is defined by \eqref{equality_covariance_truncated} and $W(t)$ is a standard Brownian motion.
\end{lemma}
	
\begin{proof}
By the Cram\'{e}r-Wold theorem, we can prove that
\begin{equation}	\label{equality_truncated_cramer}
\mu' N^{\frac{1}{2}} Y_{N,\ell}^{\operatorname{S},(\kappa)}(t)+
\nu' W_{N}(t)
\overset{f.d.d.}{\longrightarrow}
\mu' G^{\operatorname{S},(\kappa)}_{\ell}(t)+\nu'W(t)
\end{equation}
with $\mu \in \RR^{|I_{\operatorname{S}}|}$ and $\nu \in \RR^{p}$.
The left-hand side of \eqref{equality_truncated_cramer} can be written as
\begin{equation*}
\mu' N^{\frac{1}{2}} Y^{\operatorname{S},(\kappa)}_{N,\ell}(t)+
\nu' W_{N}(t)
=
N^{-\frac{1}{2}} \sum_{n=1}^{\lfloor Nt \rfloor} Q_{n,\ell}^{(\kappa)}
\end{equation*}
with
\begin{equation*}
Q_{n,\ell}^{(\kappa)}
=	
\mu' \vecop_{I_{\operatorname{S}}}( X_{n}^{(\kappa)} X_{n+\ell}^{(\kappa)'}-\E(X_{0}^{(\kappa)} X_{\ell}^{(\kappa)'}) )
+	
\nu' \sum_{i=(2\kappa+\ell-1)n+1}^{(2\kappa+\ell)n} \varepsilon_{i} .
\end{equation*}
Following \cite{HorvathKokoszka} and \cite{BrockwellDavis}, we shall use the notion of 
$m$-dependence. Recall that a stationary sequence $\{Y_{j}\}_{j \in \ZZ}$ is $m$-dependent, where $m$ is a non-negative integer if the random sequences $\{Y_{j}\}_{j \leq 0}$ and $\{Y_{j}\}_{j \geq m+1}$ are independent.
Define the sequence $\{Y_{n}\}_{n \in \ZZ}$ of $\RR^{p^2(\ell+1)}$-valued random variables by	
\begin{equation*}
Y_{n}=	(Z_{n},Z_{n+1},\dots ,Z_{n+\ell}),
\end{equation*}
where $Z_{n+\ell}:=\vecop_{I_{\operatorname{S}}}(X_{n}^{(\kappa)} X_{n+\ell}^{(\kappa)'})$.
Since the process $\{Y_{n}\}_{n \in \ZZ}$ is $(2\kappa+\ell)$-dependent, so is $\{Q_{n,\ell}^{(\kappa)}\}_{n \in \ZZ}$.
For any $\lambda \in \RR^{\ell+1}$, the sequence $(\lambda'Q_{n,\ell}^{(\kappa)})$ is $(2\kappa+\ell)$-dependent as well. Then, the convergence \eqref{equality_truncated_cramer} follows by the functional central limit theorem 
for $m$-dependent processes in \cite{Billingsley_Inv}. Since the  joint asymptotic normality is proven, it is left to verify that the asymptotic covariance structure of $\vecop(N^{\frac{1}{2}} Y_{N,\ell}^{S,(\kappa)}(t), W_{N}(t))$ coincides with that of $\vecop(G^{\operatorname{S},(\kappa)}_{\ell}(t),W(t))$. 
For $\Cov(N^{\frac{1}{2}}Y_{N,\ell}^{\operatorname{S},(\kappa)}(t),N^{\frac{1}{2}}Y_{N,\ell}^{\operatorname{S},(\kappa)}(u))$, the relation follows by Lemma \ref{Lemma_Gaussian_convergence_covariance} and \eqref{equality_covariance_truncated}. By similar arguments as in Lemma 
\ref{Lemma_Gaussian_convergence_covariance} for $t<u$,
\begin{equation} \label{eq:correlated}
\begin{aligned}
&
\lim_{N \to \infty} \E(N^{\frac{1}{2}} Y_{N,\ell}^{\operatorname{S},(\kappa)}(t)W_{N}'(u)) \\
&=	
\lim_{N \to \infty} \frac{\lfloor Nt \rfloor}{N}
\sum_{|r|<\lfloor Nt \rfloor} \Big(1-\frac{|r|}{\lfloor Nt \rfloor} \Big)
\sum_{i=-\kappa}^{\kappa} E_{I_{\operatorname{S}}} (\VPsi_{r+\ell-i} \otimes \VPsi_{r-i})\widetilde{\Sigma}	\\
&=	
t \sum_{ r \in \ZZ} \sum_{i=-\kappa}^{\kappa} E_{I_{\operatorname{S}}} 
(\VPsi_{r+\ell-i} \otimes \VPsi_{r-i})\widetilde{\Sigma},
\end{aligned}
\end{equation}
where $\widetilde{\Sigma}=\E(\vecop(\varepsilon_{0}\varepsilon_{0} ')\varepsilon_{0}' )$.
\end{proof}
	
Since $N^{\frac{1}{2}} Y_{N,\ell}^{\operatorname{S},(\kappa)}(t)$ and $W_{N}'(t)$ are not asymptotically uncorrelated as shown in \eqref{eq:correlated}, one can infer that the resulting limits $Z^{\operatorname{L}}(t)$ of the long-range dependent components and $G_{\ell}^{\operatorname{S}}(t)$ of the short-range dependent components in Theorem \ref{Theorem_Mixture_SRD_LRD} are not independent. However, Lemma \ref{le:mixedcov} gives uncorrelatedness between $Z^{\operatorname{L}}(t)$ and $G_{\ell}^{\operatorname{S}}(t)$.
\par	
Using the same notation as in the proof of Lemma \ref{Lemma_off_diagonals}, the joint convergence in the previous lemma still holds by replacing $W_{N}(t)$ by 
$(W_{N}(\Delta_{-J}),\dots,W_{N}(\Delta_{J}))$ with 
$W_{N}(\Delta_{i}):=(W_{1,N}(\Delta_{i}),\dots,W_{p,N}(\Delta_{i}))$ and 
$W_{N}(\Delta_{i})$ as in \eqref{equality_partial_sum},
since the intervals $\Delta_{i}$ are disjoint.

\begin{lemma}	\label{Lemma_truncated_linearform}
Replace $(N^{\frac{1}{2}} Y_{N,\ell}^{\operatorname{S},(\kappa)}(t), W_{N}(t))$ by 
$(N^{\frac{1}{2}} Y_{N,\ell}^{\operatorname{S},(\kappa)}(t), \Delta_{N}^{-1} O^{\operatorname{L}}_{N,\ell}(t))$
in Lemma \ref{Lemma_truncated_iid}.
Assume that the weights $C_{N}$ defined in \eqref{equality_Cn(q,...)} are such that
for a function $f \in L^{2}(\RR^{2}, \RR^{p^2 \times p^2})$ the functions
\begin{equation*}
\widetilde{C}_{N}(x_{1},x_{2})= N C_{N}([x_{1}N],[x_{2}N]), 
\hspace{0.2cm}x_{1}, x_{2} \in \RR
\end{equation*}
satisfy
\begin{equation*}
||\widetilde{C}_{N}-f|| \to 0.
\end{equation*}
Then,
\begin{equation*}
\vecop(N^{\frac{1}{2}}Y_{N,\ell}^{\operatorname{S},(\kappa)}(t), \Delta_{N}^{-1} O^{\operatorname{L}}_{N,\ell}(t))
\overset{f.d.d.}{\longrightarrow}
\vecop(G^{\operatorname{S},(\kappa)}_{\ell}(t),Z^{\operatorname{L}}(t)).
\end{equation*}
\end{lemma}
		
\begin{proof}
We prove the lemma by combining the previous Lemmas \ref{Lemma_truncated_iid} and
\ref{Lemma_off_diagonals}. As in \eqref{equality_Cn(q,...)}, the sum $O^{\operatorname{L}}_{N,\ell}(t)$ can be represented as $Q_{2}(C_{N})$ with $Q_{2}$ defined in \eqref{eq:Q2}.
By Lemma \ref{Lemma_off_diagonals}, for all $\varepsilon>0$, 
there exists $f_{\varepsilon} \in S_{M}(\RR^{2},\RR^{p^2 \times p^2})$, $M \geq 1$, such that \eqref{Con1}, \eqref{Con2} and \eqref{Con3} are satisfied. 
As in the proof of Lemma \ref{Lemma_off_diagonals} by applying the continuous mapping theorem to the result in Lemma \ref{Lemma_truncated_iid}, we get
\begin{equation*}
\vecop(N^{\frac{1}{2}} Y^{\operatorname{S},(\kappa)}_{N,\ell}(t),
Q_{2}(C_{N,\varepsilon}))
\overset{f.d.d.}{\longrightarrow}
\vecop(G^{\operatorname{S},(\kappa)}_{\ell}(t),
I_{2}(f_{\varepsilon})).
\end{equation*}
Now, define
\begin{equation*}
\begin{aligned}
R^{\varepsilon,(\kappa)}_{N,\ell}(t)
&=
N^{\frac{1}{2}} Y_{N,\ell}^{\operatorname{S},(\kappa)}(t)+
Q_{2}(C_{N,\varepsilon}),	\\
R^{(\kappa)}_{N,\ell}(t)
&=
N^{\frac{1}{2}} Y_{N,\ell}^{\operatorname{S},(\kappa)}(t)+
Q_{2}(C_{N}),
\\
R^{\varepsilon,(\kappa)}_{\ell}(t)
&=
G_{\ell}^{\operatorname{S},(\kappa)}(t)+I_{2}(f_{\varepsilon}),	\\
R_{\ell}^{(\kappa)}(t)
&=
G_{\ell}^{\operatorname{S},(\kappa)}(t)+I_{2}(f).
\end{aligned}
\end{equation*}
Then, by \eqref{Con1}, \eqref{Con2} and \eqref{Con3},
\begin{equation*}
\begin{aligned}
&	
R^{\varepsilon,(\kappa)}_{N,\ell}(t)	
\overset{f.d.d.}{\longrightarrow}	
R^{\varepsilon,(\kappa)}_{\ell}(t),	
\hspace{0.2cm} \text{ as } \hspace{0.2cm} N \to \infty, 
\\ &	
R^{\varepsilon,(\kappa)}_{\ell}(t)		
\overset{f.d.d.}{\longrightarrow}	
R_{\ell}^{(\kappa)}(t),	
\hspace{0.2cm} \text{ as } \hspace{0.2cm} \varepsilon \to 0,
\\ &	
\lim_{\varepsilon\to 0} \limsup_{N \to \infty} 
\Var \|R^{\varepsilon,(\kappa)}_{N,\ell}(t)-R^{(\kappa)}_{N,\ell}(t)\|_{F}=0
\hspace{0.2cm} \text{ for all } t \in [0,1],
\end{aligned}
\end{equation*}
which finally implies
\begin{equation*}
R^{(\kappa)}_{N,\ell}(t)\overset{f.d.d.}{\longrightarrow}R^{(\kappa)}_{\ell}(t)
\end{equation*}
by \citet[Lemma 4.2.1]{giraitis}.
\end{proof}

In the following lemma the truncated qunatities get replaced by their non-truncated originals.
\begin{lemma} \label{Lemma_non_truncated}
Replace $(N^{\frac{1}{2}} Y_{N,\ell}^{\operatorname{S},(\kappa)}(t), \Delta_{N}^{-1} O^{\operatorname{L}}_{N,\ell}(t))$ by $(N^{\frac{1}{2}}Y_{N,\ell}^{\operatorname{S}}(t), \Delta_{N}^{-1} O^{L}_{N,\ell}(t))$ in Lemma \ref{Lemma_truncated_linearform}. Then,
\begin{equation*}
\vecop(N^{\frac{1}{2}} Y_{N,\ell}^{\operatorname{S}}(t), \Delta_{N}^{-1} O^{\operatorname{L}}_{N,\ell}(t))
\overset{f.d.d.}{\longrightarrow}
\vecop(G^{\operatorname{S}}_{\ell}(t),Z^{\operatorname{L}}(t)).
\end{equation*}
\end{lemma}
	
\begin{proof}
Define
\begin{equation*}
\begin{aligned}
R_{N,\ell}(t)	
&=	
\mu' N^{\frac{1}{2}} Y_{N,\ell}^{\operatorname{S}}(t)+\lambda' Q_{2}(C_{ N}),	
\\
R_{\ell}(t)		
&=	
\mu' G_{\ell}^{\operatorname{S}}(t)+\lambda' I_{2}(f)	
\end{aligned}
\end{equation*}
for $\mu \in \RR^{|I_{\operatorname{S}}|}$, $\lambda \in \RR^{|I_{\operatorname{L}}|}$. We prove that
\begin{align}
&R_{N,\ell}^{(\kappa)}(t) 	\overset{f.d.d.}{\longrightarrow} 	R_{\ell}^{(\kappa)}(t),	
\hspace{0.2cm} \text{ as } N \to \infty, 	\label{Con1_lemma_non_trunc}\\
&R_{\ell}^{(\kappa)}(t)		\overset{f.d.d.}{\longrightarrow}	R_{\ell}(t),	
\hspace{0.2cm} \text{ as } l \to \infty	\label{Con2_lemma_non_trunc}	,\\
& \lim_{\kappa \to \infty} \limsup_{N \to \infty}  \Var(R_{N,\ell}^{(\kappa)}(t) -R_{N,\ell}(t))=0
\hspace{0.2cm} \text{ for all } t \in [0,1].
\label{Con3_lemma_non_trunc}
\end{align}	
The convergence \eqref{Con1_lemma_non_trunc} follows by Lemma 
\ref{Lemma_truncated_linearform}, \eqref{Con2_lemma_non_trunc} is a consequence of
\begin{equation*}
\lim_{\kappa \to \infty} \Cov(G_{\ell_{1}}^{\operatorname{S},(\kappa)}(t),G^{\operatorname{S},(\kappa)}_{\ell_{2}}(u))
=
\Cov(G_{\ell_{1}}^{\operatorname{S}}(t),G^{\operatorname{S}}_{\ell_{2}}(u)),
\end{equation*}
while \eqref{Con3_lemma_non_trunc} follows from
\begin{equation*}
\begin{aligned}
\lim_{\kappa \to \infty} \limsup_{N \to \infty}
\E(\mu' N^{\frac{1}{2}} Y_{N,\ell}^{\operatorname{S},(\kappa)}(t))^2
&=	
\E(\mu'G_{\ell}^{\operatorname{S}}(t))^2,	\\
\limsup_{N \to \infty}
\E(\mu' N^{\frac{1}{2}} Y_{N,\ell}^{\operatorname{S}}(t))^2
&=	
\E(\mu' G_{\ell}^{\operatorname{S}}(t))^2,	\\
\lim_{\kappa \to \infty} \limsup_{N \to \infty}
\E(\mu' N^{\frac{1}{2}} Y_{N,\ell}^{\operatorname{S},(\kappa)}(t)
\mu' N^{\frac{1}{2}} Y_{N,\ell}^{\operatorname{S}}(t))
&=	
\E(\mu'G_{\ell}^{\operatorname{S}}(t))^2	.
\end{aligned}
\end{equation*}
\end{proof}

To conclude the proof of Theorem \ref{Theorem_Mixture_SRD_LRD}, it remains to verify that $C_{N}$ defined in \eqref{equality_Cn(q,...)} satisfies the assumptions of Lemma \ref{Lemma_off_diagonals}. 
Write
\begin{equation*}
\begin{aligned}
NC_{N}([x_{1}N],[x_{2}N])
&=		
\sum_{n=1}^{\lfloor Nt \rfloor} \Delta_{N}^{-1} E_{I_{\operatorname{L}}}
(\VPsi_{n+\ell-[x_{2}N]} \otimes \VPsi_{n-[x_{1}N]})	\\
&=		
N \int_{0}^{t} \Delta_{N}^{-1} E_{I_{\operatorname{L}}}
(\VPsi_{[vN]+\ell-[x_{2}N]} \otimes \VPsi_{[vN]-[x_{1}N]}) dv.
\end{aligned}
\end{equation*}
Then, considering the expression componentwise
\begin{equation*}
\begin{aligned}
&	
N^{2-d_{l_{1}}-d_{l_{2}}}
\psi_{l_{1}q_{1}, [vN]-[x_{1}N]} \psi_{l_{2}q_{2}, [vN]+\ell-[x_{2}N]}	
\\ &  = 	
N^{2-d_{l_{1}}-d_{l_{2}}} C_{l_{1}q_{1}}([vN]-[x_{1}N]) |[vN]-[x_{1}N]|^{d_{l_{1}}-1}  
\\ &	\phantom{  = l	N^{2-d_{l_{1}}-d_{l_{2}}}}
\times	
C_{l_{2}q_{2}}([vN]+\ell-[x_{2}N]) |[vN]+\ell-[x_{2}N]|^{d_{l_{2}}-1} 
\\ & =	
p_{N}^{(l_{1},q_{1})}(v,x_{1})p_{N}^{(l_{2},q_{2})}(v,x_{2})
\Big[ 
\nu_{l_{1}l_{2}q_{1}q_{2}}^{(+, +)}(v,x_{1},x_{2})+
\nu_{l_{1}l_{2}q_{1}q_{2}}^{(-, -)}(v,x_{1},x_{2})\\
&\phantom{ =  l p_{n}^{(l_{1},q_{1})}(v,x_{1})p_{n}^{(l_{2},q_{2})}(v,x_{2}) \Big[  }
+	
\nu_{l_{1}l_{2}q_{1}q_{2}}^{(+, -)}(v,x_{1},x_{2})+
\nu_{l_{1}l_{2}q_{1}q_{2}}^{(-, +)}(v,x_{1},x_{2})
\Big] ,
\end{aligned}
\end{equation*}		
where
\begin{equation*}
\nu_{l_{1}l_{2}q_{1}q_{2}}^{(s_{1},s_{2})}(v,x_{1},x_{2})
:=		
(v-x_{1})_{s_{1}}^{d_{l_{1}}-1}  
(v-x_{2})_{s_{2}}^{d_{l_{2}}-1}
\alpha_{l_{1}q_{1}}^{s_{1}}\alpha_{l_{2}q_{2}}^{s_{2}}, 
\text{ for } s_{1},s_{2} \in \{+,-\}
\end{equation*}
and
\begin{equation*}
p_{N}^{(l_{i},q_{i})}(v,x_{i})	
:=
\frac{N^{1-d_{l_{i}}}
C_{l_{i}q_{i}}([vN]-[x_{i}N]) |[vN]-[x_{i}N]|^{d_{l_{i}}-1}}{
(v-x_{i})_{+}^{d_{l_{i}}-1}\alpha_{l_{i}q_{i}}^{+}+
(v-x_{i})_{-}^{d_{l_{i}}-1}\alpha_{l_{i}q_{i}}^{-}  }\to 1.
\end{equation*}
Furthermore, there are constants $C_{1}, C_{2}$, such that
\begin{equation*}
\sup_{N \geq 1} \sup_{v,x_{i}} p_{N}^{(l_{i},q_{i})}(v,x_{1})	\leq C_{i}, 
\hspace{0.2cm}
i=1,2,
\end{equation*}
which implies
\begin{equation*}
\begin{aligned}
&\sup_{x_{1},x_{2}} \Big| \int_{0}^{t}
N^{2-d_{l_{1}}-d_{l_{2}}}
\psi_{l_{1}q_{1}, [vN]-[x_{1}N]} \psi_{l_{2}q_{2},[vN]+\ell-[x_{2}N]} dv  \Big| 
\\&\leq		
C\sup_{x_{1},x_{2}}
|\sum_{s_{1},s_{2} \in \{+,-\}}\int_{0}^{t} \nu_{l_{1}l_{2}q_{1}q_{2}}^{(s_{1},s_{2} )}(v,x_{1},x_{2}) dv|
\end{aligned}
\end{equation*}
and by the dominated convergence theorem
\begin{equation*}
\int_{0}^{t}
N^{2-d_{l_{1}}-d_{l_{2}}}
\psi_{l_{1}q_{1},[vN]-[x_{1}N]} \psi_{l_{2}q_{2},[vN]+\ell-[x_{2}N]} dv
\to			
\sum_{s_{1},s_{2} \in \{+,-\}} \int_{0}^{t} \nu_{l_{1}l_{2}q_{1}q_{2}}^{(s_{1},s_{2} )}(v,x_{1},x_{2})dv.
\end{equation*}
Then, applying again the dominated convergence theorem leads to
\begin{equation*}
\begin{aligned}
&	
\| \widetilde{C}_{N} -f_{H,t} \|^2	
\\ &=	
\int_{\RR^2} \Big\| \int_{0}^{t} N \Delta_{N}^{-1} E_{I_{\operatorname{L}}}
(\VPsi_{[vN]+\ell-[x_{2}N]} \otimes \VPsi_{[vN]-[x_{1}N]}) dv 
-	
E_{I_{\operatorname{L}}} f_{H,t}(x_{1},x_{2}) \Big\|^2_{F}	dx_{1}dx_{2}
\to 0.
\end{aligned}
\end{equation*}
This shows that the conditions in Lemma \ref{Lemma_off_diagonals} are satisfied.

\subsubsection{Tightness} \label{s:Tight_SampleA}
\begin{lemma} 
Under the assumptions in Theorem \ref{Theorem_Mixture_SRD_LRD} the sample autocovariance process is tight in $D[0,1]^{p^2}$.
\end{lemma}

\begin{proof}
By Lemma 1 in \cite{BaiTaqqu}, it is enough to prove tightness in each component
\begin{equation*}
\begin{aligned}
&
a^{-1}_{kl}(N)Y_{kl,N,\ell}(t) 
\\ & =
a^{-1}_{kl}(N) \sum_{n=1}^{ \lfloor Nt \rfloor} (X_{k,n}X_{l,n+\ell} - \E(X_{k,0}X_{l,\ell}))
\\ & =
\sum_{r,s=1}^{p} a^{-1}_{kl}(N) \sum_{n=1}^{ \lfloor Nt \rfloor} \Big(
\sum_{j+\ell \neq i }
\psi_{kr,i} \psi_{ls,j} \varepsilon_{r,n-j}\varepsilon_{s,n+\ell-i}
+
\sum_{j \in \ZZ}
\psi_{kr,j} \psi_{ls,j+\ell} (\varepsilon_{r,n-j}\varepsilon_{s,n-j}-1) \Big)
\end{aligned}
\end{equation*}
where
\begin{equation*}
a_{kl}(N)=
\begin{cases}
N^{d_{k}+d_{l}-1}, 	&\text{if } \hspace{0.2cm} k,l \in I_{\operatorname{L}},	\\
N^{\frac{1}{2}},		&\text{if } \hspace{0.2cm}	k,l \in I_{\operatorname{S}}.
\end{cases}
\end{equation*}	 
By \cite{suquet1999tightness} it is enough to prove tightness of one summand
\begin{equation*}
\begin{aligned}
a^{-1}_{kl}(N) \sum_{n=1}^{ \lfloor Nt \rfloor} 
\sum_{j+\ell \neq i }
\psi_{kr,i} \psi_{ls,j} \varepsilon_{r,n-j}\varepsilon_{s,n+\ell-i}
+
a^{-1}_{kl}(N) \sum_{n=1}^{ \lfloor Nt \rfloor} 
\sum_{j \in \ZZ}
\psi_{kr,j} \psi_{ls,j+\ell} (\varepsilon_{r,n-j}\varepsilon_{s,n-j}-1).
\end{aligned}
\end{equation*}	
Note that for fixed $r,s \in \{1,\dots,p\}$ the first summand is the sample mean process of a univariate bilinear polynomial-form process. The second summand is the sample mean process of a univariate linear process generated by an i.i.d.\ sequence $\{\varepsilon_{r,j}\varepsilon_{s,j}\}_{j \in \ZZ}$. 
\par
In the case $k,l \in I_{\operatorname{S}}$ and under the assumption $\E\| \varepsilon_{0} \|^5<\infty$, the first summand is tight by Theorem 3.8 (2.d.) in \cite{BaiTaqqu2013}, 
The second summand is tight by Proposition 4.4.4 in \cite{giraitis}. 
For $k,l \in I_{\operatorname{L}}$, the first summand is tight by Theorem 4.8.2 in \cite{giraitis} and the second by Proposition 4.4.4 in \cite{giraitis}.
\end{proof}

\subsubsection{Properties of the limit process} \label{s:Properties_SampleA}
The next lemma provides some properties of the process $\{Z(t)\}_{t \in \RR}$ defined in \eqref{equality_limit_process_non_Gaussian}.

\begin{lemma} \label{Lemma_properties_Rosen}
The process $\{Z(t)\}_{t \in \RR}$ is operator self-similar with scaling family $\{\Delta_{c}:\RR^{p \times p} \\ \to \RR^{p \times p} ~|~ c>0\}$, where
$\Delta_{c} = c^{H} \otimes c^{H}$, and has stationary increments.
\end{lemma}
\begin{proof}
We get
\begin{align*}
I_{2}(f_{H,ct})
& 
\overset{\phantom{f.d.d.}}{=} 	
(c^{H-I_{p}} \otimes c^{H-I_{p}} ) \sum_{s_{1},s_{2} \in \{+,-\}} \int_{\RR^2}' \int_{0}^{ct}
\Big(\Big(\frac{v-x_{2}}{c} \Big)_{s_{1}}^{H-I_{p}} M^{s_{1}} \otimes
\Big(\frac{v-x_{1}}{c}\Big)_{s_{2}}^{H-I_{p}} M^{s_{2}}\Big)  \\
&	
\phantom{(c^{H-I_{p}} \otimes c^{H-I_{p}} ) \sum_{s_{1},s_{2} \in \{+,-\}} \int_{\RR^2}'}
\times	dv \vecop \Big(W(dx_{1})W'(dx_{2}) \Big)	\\
& \overset{\phantom{f.d.d.}}{=} 	
(c^{H-\frac{1}{2}I_{p}} \otimes c^{H-\frac{1}{2}I_{p}}) \int_{\RR^2} '
f_{H,t}(x_{1},x_{2}) 
\vecop \Big(W(dcx_{1})W'(dcx_{2}) \Big)	  \\
& \overset{f.d.d.}{=} 
(c^{H} \otimes c^{H}) I_{2}(f_{H,t}) ,
\end{align*}
since	 $W(d(cx))\overset{f.d.d.}{=} c^{\frac{1}{2}I_{p}}W(dx)$ and by Theorem 2.2 in \cite{MagnusNeudecker}. Thus, $Z(ct)\overset{f.d.d.}{=} 	(c^{H} \otimes c^{H}) Z(t) $.	\\
Similarly, we can prove that the process has stationary increments, since for any $\ell \in \RR$
\begin{equation*}
\begin{aligned}
I_{2}(f_{H,t+\ell})
& \overset{\phantom{f.d.d.}}{=}	
\sum_{s_{1},s_{2} \in \{+,-\}} \int_{\RR^2}' \int_{-\ell}^{t}
((v-(x_{2}-\ell))_{s_{1}}^{H-I_{p}}M^{s_{1}} \otimes
(v-(x_{1}-\ell))_{s_{2}}^{H-I_{p}}M^{s_{2}}) \\
& \phantom{\sum_{s_{1},s_{2} \in \{+,-\}} \int_{\RR^2}'}
\times
dv \vecop \Big(W(dx_{1})W'(dx_{2}) \Big) \\
& \overset{\phantom{f.d.d.}}{=}	 
\int_{\RR^2}' (f_{H,t}(x_{1},x_{2})-f_{H,-\ell}(x_{1},x_{2}))
\vecop \Big(W(d(x_{1}+\ell))W'(d(x_{2}+\ell))\Big)	\\
&\overset{f.d.d.}{=} 
I_{2}(f_{H,t})-I_{2}(f_{H,-\ell}),		
\end{aligned}
\end{equation*}
since $W(d(x+\ell))\overset{f.d.d.}{=} W(dx)$. Thus 
$I_{2}(f_{H,t+\ell})-I_{2}(f_{H,\ell}) \overset{f.d.d.}{=} I_{2}(f_{H,t})-I_{2}(f_{H,0})$
and $Z(t+\ell)-Z(\ell)\overset{f.d.d.}{=} Z(t)-Z(0)$.
\end{proof}

\bigskip
\noindent {\bf Acknowledgements:} The author would like to thank the three anonymous referees and the two editors for their comments and their advice that led to a substantial revision and improvement of the original version of this paper.
Parts of this work were finalized during a stay in the Department of Statistics and Operation Research at the University of North Carolina, Chapel Hill. 
The author thanks the department for its hospitality and, in particular, Vladas Pipiras for his support.
The author would also like to thank the Research Training Group 2131 - \textit{High-dimensional Phenomena in Probability - Fluctuations and Discontinuity} for financial support. 

\bigskip

\bibliographystyle{plainnat}
\bibliography{biblinearpr}

\end{document}